\begin{document}

\begin{doublespace}
\def\1{{\bf 1}}
\def\ind{{\bf 1}}
\def\nn{\nonumber}
\newcommand{\I}{\mathbf{1}}

\def\sA {{\cal A}} \def\sB {{\cal B}} \def\sC {{\cal C}}
\def\sD {{\cal D}} \def\sE {{\cal E}} \def\sF {{\cal F}}
\def\sG {{\cal G}} \def\sH {{\cal H}} \def\sI {{\cal I}}
\def\sJ {{\cal J}} \def\sK {{\cal K}} \def\sL {{\cal L}}
\def\sM {{\cal M}} \def\sN {{\cal N}} \def\sO {{\cal O}}
\def\sP {{\cal P}} \def\sQ {{\cal Q}} \def\sR {{\cal R}}
\def\sS {{\cal S}} \def\sT {{\cal T}} \def\sU {{\cal U}}
\def\sV {{\cal V}} \def\sW {{\cal W}} \def\sX {{\cal X}}
\def\sY {{\cal Y}} \def\sZ {{\cal Z}}

\def\bA {{\mathbb A}} \def\bB {{\mathbb B}} \def\bC {{\mathbb C}}
\def\bD {{\mathbb D}} \def\bE {{\mathbb E}} \def\bF {{\mathbb F}}
\def\bG {{\mathbb G}} \def\bH {{\mathbb H}} \def\bI {{\mathbb I}}
\def\bJ {{\mathbb J}} \def\bK {{\mathbb K}} \def\bL {{\mathbb L}}
\def\bM {{\mathbb M}} \def\bN {{\mathbb N}} \def\bO {{\mathbb O}}
\def\bP {{\mathbb P}} \def\bQ {{\mathbb Q}} \def\bR {{\mathbb R}}
\def\bS {{\mathbb S}} \def\bT {{\mathbb T}} \def\bU {{\mathbb U}}
\def\bV {{\mathbb V}} \def\bW {{\mathbb W}} \def\bX {{\mathbb X}}
\def\bY {{\mathbb Y}} \def\bZ {{\mathbb Z}}
\def\R {{\mathbb R}} \def\RR {{\mathbb R}} \def\H {{\mathbb H}}
\def\n{{\bf n}} \def\Z {{\mathbb Z}}

\newcommand{\expr}[1]{\left( #1 \right)}
\newcommand{\cl}[1]{\overline{#1}}
\newtheorem{thm}{Theorem}[section]
\newtheorem{lemma}[thm]{Lemma}
\newtheorem{defn}[thm]{Definition}
\newtheorem{prop}[thm]{Proposition}
\newtheorem{corollary}[thm]{Corollary}
\newtheorem{remark}[thm]{Remark}
\newtheorem{example}[thm]{Example}
\numberwithin{equation}{section}
\def\ee{\varepsilon}
\def\qed{{\hfill $\Box$ \bigskip}}
\def\NN{{\mathcal N}}
\def\AA{{\mathcal A}}
\def\MM{{\mathcal M}}
\def\BB{{\mathcal B}}
\def\CC{{\mathcal C}}
\def\LL{{\mathcal L}}
\def\DD{{\mathcal D}}
\def\bD{{\mathbb D}}
\def\FF{{\mathcal F}}
\def\EE{{\mathcal E}}
\def\HH{{\mathcal H}}
\def\QQ{{\mathcal Q}}
\def\SS{{\mathcal S}}
\newcommand\II{\mathcal{I}}
\def\RR{{\mathbb R}}
\def\R{{\mathbb R}}
\def\L{{\bf L}}
\def\K{{\bf K}}
\def\S{{\bf S}}
\def\A{{\bf A}}
\def\E{{\mathbb E}}
\def\bG{{\mathbb G}}
\def\F{{\bf F}}
\def\P{{\mathbb P}}
\def\N{{\mathbb N}}
\def\eps{\varepsilon}
\def\wh{\widehat}
\def\wt{\widetilde}
\def\pf{\noindent{\bf Proof.} }
\def\pff{\noindent{\bf Proof} }
\def\cp{\mathrm{Cap}}

\newcommand{\dc}{\mathfrak{A}}
\newcommand{\X}{\mathfrak{X}}
\newcommand{\dom}{\mathcal{D}}
\newcommand{\pr}{\mathbf{P}}
\newcommand{\ex}{\mathbf{E}}
\newcommand{\M}{\mathcal{M}}
\newcommand{\sub}{\subseteq}
\newcommand{\ph}{\varphi}
\newcommand{\ro}{\varrho}
\newcommand{\pot}{\mathcal{U}}
\newcommand{\potY}{\mathcal{V}}
\newcommand{\norm}[1]{\left\| #1 \right\|}
\newcommand{\snorm}[1]{\| #1 \|}
\newcommand{\set}[1]{\left\{ #1 \right\}}
\newcommand{\abs}[1]{\left| #1 \right|}
\newcommand{\as}{\text{-a.s.}}
\newcommand{\aevery}{\text{-a.e.}}
\newcommand\newAF{\widetilde{A}}

\newcommand{\itref}[1]{\ref{#1}}

\title{\Large \bf
On purely discontinuous additive functionals of subordinate Brownian motions
}

\author{{\bf Zoran Vondra\v{c}ek}\thanks{Research supported in part by the Croatian Science Foundation under the project 3526}
\quad and
\quad {\bf Vanja Wagner$^\ast$}
}

\date{}

\maketitle

\begin{abstract}
Let $A_t=\sum_{s\le t} F(X_{s-},X_s)$ be a purely discontinuous additive functional of a subordinate Brownian motion $X=(X_t, \P_x)$. 
We give a sufficient condition on the non-negative function $F$ that guarantees that finiteness of $A_{\infty}$ implies finiteness of its expectation. This result is then applied to study the relative entropy of  $\P_x$ and the probability measure induced by a purely discontinuous Girsanov transform of the process $X$. We prove these results under the weak global scaling condition on the Laplace exponent of the underlying subordinator. 
\end{abstract}

\noindent {\bf AMS 2010 Mathematics Subject Classification}: Primary 60J55; Secondary 60G51, 60J45, 60H10.

\noindent {\bf Keywords and phrases:} Additive functionals, subordinate Brownian motion, purely discontinuous Girsanov
transform, absolute continuity, singularity, relative entropy.

%%%%%%%%%%%%%%%%%%%%%%%%%%%%%%%%%%%%%%%%%%%%%%%%%%%%%%%%%%%%%%%%%%%%%%%%%%%%%%%%%%%%%%%%%%%%%%%%%%%%%%%%%%%%%%%%%%%%
%%%%%%%%%%%%%%%%%%%%%%%                                                        Introduction                                                 %%%%%%%%%%%%%%%%%%%%%%%%%%%%%%%%%%
%%%%%%%%%%%%%%%%%%%%%%%%%%%%%%%%%%%%%%%%%%%%%%%%%%%%%%%%%%%%%%%%%%%%%%%%%%%%%%%%%%%%%%%%%%%%%%%%%%%%%%%%%%%%%%%%%%%%

\section{Introduction}\label{s:intro}
Let $X=(X_t, \P_x)$ be an isotropic L\'evy process in $\R^d$, $d\geq 1$. For a non-negative, bounded and symmetric function $F:\R^d\times \R^d\to [0,\infty)$, define the purely discontinuous additive functional $A_t:=\sum_{0<s\le t}F(X_{s-}, X_s)$. It is often important to understand when does the finiteness of the additive functional at infinity, $A_{\infty}<\infty$, imply the finiteness of the expectation of $A_{\infty}$. To be more precise, we will be interested in sufficient conditions on the function $F$ such that $\P_x(A_{\infty}<\infty)$ for all $x\in \R^d$ implies that $\E_x A_{\infty}<\infty$ for all $x\in \R^d$. In the case of an isotropic $\alpha$-stable process $X$ this question was investigated
in \cite{SV}, cf.~Theorem 4.15. The result of that theorem was further used to study the relative entropy of  $\P_x$ and the probability measure induced by a purely discontinuous Girsanov transform of the stable process $X$, see \cite[Theorem 1.2]{SV}.

The goal of this paper is to extend the results of \cite{SV} from the stable process to a rather large class of subordinate Brownian motions. Instead of the strict scaling property enjoyed by the stable process, we will impose as a substitute the weak global scaling condition. More precisely, let $W=(W_t, \P_x)$ be a $d$-dimensional Brownian motion running twice as fast as the standard Brownian motion, $d\ge 1$, 
and let $S=(S_t)_{t\ge 0}$ be an independent subordinator with the Laplace exponent $\phi$. The process $X=(X_t, \P_x)$ defined by $X_t:=W_{S_t}$ is called a subordinate Brownian motion. It is an isotropic L\'evy process with the characteristic exponent $\psi(\xi)=\phi(|\xi|^2)$. Since any L\'evy process  is completely characterized by its characteristic exponent, we will, without loss of generality, throughout the paper assume that the subordinate Brownian motion $X=(\Omega, \MM, \MM_t, \theta_t, X_t, \P_x)$ is defined on the Skorokhod path space $\Omega=D([0,\infty), \R^d)$ of cadlag functions $\omega:[0,\infty)\to \R^d$ with $X_t(\omega)=\omega(t)$ being projections, $\MM=\sigma\left(\cup_{t\ge 0}\MM_t\right)$, and the shift $(\theta_t \omega)(s)=\omega(s+t)$.  

Our main assumption on the Laplace exponent $\phi$ is the following global scaling condition: There exist constants  $0<\delta_1\leq \delta_2<1\wedge \frac d 2$ and $a_1,a_2>0$  such that
\begin{equation}\label{e:H}
   a_1\lambda^{\delta_1}\leq\frac{\phi(\lambda x)}{\phi(x)}\leq a_2\lambda^{\delta_2},\quad \lambda\geq 1, x>0. 
\end{equation}

Recall that the Laplace exponent $\phi$ is a Bernstein function satisfying $\phi(0)=0$, which implies the representation
$$
\phi(\lambda)=b\lambda +\int_{(0,\infty)}(1-e^{-\lambda t})\, \mu(dt)\, , \quad \lambda >0\, ,
$$
where $\mu$ is the L\'evy measure of $\phi$. Under \eqref{e:H} we have $b=0$ and $\phi(s)\ge a_2^{-1} s^{\delta_2}$ for $s\in (0,1)$. The latter implies that $X$ is transient, cf.~\eqref{e:cf-criterion-2}. 
The function $\phi$ is called a complete Bernstein function if $\mu(dt)=\mu(t)dt$ with a completely monotone density $\mu(t)$, cf.~\cite{SSV}.
For simplicity, let $\Phi(s):=\phi(s^{-2})^{-1}$.
 
Let $F:\R^d\times \R^d\to [0,\infty)$ be symmetric and bounded. The next condition on $F$ will be crucial for our results: Assume that there are constants $C>0$ and $\beta >1$ such that
\begin{equation}\label{e:fuchsian}
     F(x,y)\le C \frac{\Phi(|x-y|)^{\beta}}{1+\Phi(|x|)^{\beta}+\Phi(|y|)^{\beta}}\, ,
     \qquad\text{for all\ \ } x,y\in\R^d.
 \end{equation}
In the context of elliptic diffusion, the analog of \eqref{e:fuchsian} is sometimes called the Fuchsian condition, cf.~\cite{B-AP}.
 The main result of this paper is the following theorem.
 \begin{thm}\label{t:finite-expectation}
Suppose that $X$ is the subordinate Brownian motion via the subordinator whose Laplace exponent is a complete Bernstein function and satisfies \eqref{e:H}. Let $\beta>1$ and assume that $F:\R^d\times\R^d \to [0,\infty)$ is bounded, symmetric and satisfies condition \eqref{e:fuchsian}. Let $A^F_t=\sum_{0<s\le t} F(X_{s-}, X_s)$. If $\P_x(A^F_{\infty}<\infty)=1$ for all $x\in \R^d$, then $\sup_{x\in \R^d} \E_x [A^F_{\infty}]<\infty$.
\end{thm}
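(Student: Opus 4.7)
The plan is to adapt the argument of \cite[Theorem~4.15]{SV} for isotropic stable processes, replacing the exact scaling of the stable case by the sharp two-sided potential-theoretic estimates available for subordinate Brownian motions under \eqref{e:H} together with the complete Bernstein hypothesis.

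First, I would rewrite the expected functional using the L\'evy system of $X$. Writing $j(r)$ for the L\'evy density and $G(x,y)$ for the Green function, the standard formula gives
\[
\E_x A^F_\infty = \int_{\R^d} G(x,z)\, \psi^F(z)\, dz, \qquad \psi^F(z) := \int_{\R^d} F(z,y)\, j(|z-y|)\, dy.
\]
Under \eqref{e:H} together with the complete Bernstein assumption on $\phi$, one has the sharp two-sided estimates $j(r)\asymp r^{-d}\Phi(r)^{-1}$ and $G(x,y)\asymp \Phi(|x-y|)/|x-y|^d$, which I would use throughout.

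Next, let $B^F_t := \int_0^t \psi^F(X_s)\, ds$ be the dual predictable projection of $A^F$, so that $M := A^F - B^F$ is a local martingale whose jumps are bounded in absolute value by $\|F\|_\infty$. The Lenglart--Rebolledo domination inequality then yields
\[
\P_x(B^F_\infty \ge c) \le \frac{\E_x[A^F_\infty \wedge d]}{c} + \P_x(A^F_\infty \ge d), \quad c,d>0.
\]
Choosing $d=\sqrt{c}$ and letting $c\to\infty$, the hypothesis $\P_x(A^F_\infty<\infty)=1$ forces $B^F_\infty<\infty$ $\P_x$-a.s.\ for every $x\in\R^d$; equivalently, the continuous additive functional $\int_0^\infty \psi^F(X_s)\,ds$ is a.s.\ finite under every $\P_x$.

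The heart of the argument is to upgrade this pathwise a.s.\ finiteness to pointwise, and ultimately uniform, finiteness of the potential $G\psi^F(x)=\E_x B^F_\infty$. Here I would exploit the strong Feller property and irreducibility of $X$ (which follow from \eqref{e:H} and the complete Bernstein assumption) to show that the lower-semicontinuous excessive function $G\psi^F$ cannot be infinite at a point if the associated CAF is a.s.\ finite under every $\P_x$; this yields $G\psi^F(x)<\infty$ for all $x$. The uniform bound $\sup_x G\psi^F(x)<\infty$ would then follow by substituting \eqref{e:fuchsian} and the estimate on $j$ into the definition of $\psi^F$ to obtain quantitative decay of $\psi^F$ at infinity, and splitting $\int G(x,z)\psi^F(z)\,dz$ according to whether $|z|\lesssim|x|$, $|z|\asymp|x|$ or $|z|\gg|x|$.

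The main obstacle I anticipate is the passage from a.s.\ finiteness of $B^F_\infty$ to finiteness of $\E_xB^F_\infty$, since the two are not equivalent for arbitrary CAFs. I expect this step to require a Khas'minskii-type iteration that exploits the strong Markov property of $X$ together with the quantitative control on $\psi^F$ at infinity coming from \eqref{e:fuchsian}; the Fuchsian structure plays a key role here precisely in compensating for the absence of exact scaling that was available in the stable case.
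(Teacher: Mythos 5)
There is a genuine gap, and it is exactly at the point you flag as ``the main obstacle'': your plan cannot get from a.s.\ finiteness of $B^F_\infty$ to $\sup_x G\psi^F(x)<\infty$ by direct estimation, because the Fuchsian condition \eqref{e:fuchsian} does \emph{not} supply enough decay of $\psi^F$ to make $\int G(x,z)\psi^F(z)\,dz$ converge. Splitting the integral defining $\psi^F(z)=\int F(z,y)j(|z-y|)\,dy$ into the regions $|z-y|\le|z|/2$, $|z|/2<|z-y|\le 2|z|$ and $|z-y|>2|z|$ and inserting $j(r)\asymp r^{-d}\Phi(r)^{-1}$, the best one gets from \eqref{e:fuchsian} is $\psi^F(z)\lesssim\Phi(|z|)^{-1}$ for large $|z|$ (for instance, on the region $|z-y|>2|z|$ the Fuchsian bound gives only $F(z,y)\lesssim 1$, and $\int_{|z-y|>2|z|}j(|z-y|)\,dy\asymp\Phi(|z|)^{-1}$). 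Plugging this into $G(|z|)\asymp|z|^{-d}\Phi(|z|)$ produces $\int_{|z|>1}|z|^{-d}\,dz=\infty$. This is not an artifact of a crude estimate: the paper points out (after the theorem, citing \cite[Remark~4.16]{SV}) that there exist $F$ satisfying \eqref{e:fuchsian} with $\E_x A^F_\infty=\infty$, which is only consistent with $\P_x(A^F_\infty<\infty)<1$ failing. So any argument that derives $\sup_x G\psi^F<\infty$ from \eqref{e:fuchsian} alone, bypassing the a.s.\ finiteness hypothesis, is necessarily wrong.

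The paper's proof works with the gauge $u(x):=\E_x[e^{-A^F_\infty}]$ rather than with $G\psi^F$ directly. One shows $u$ is continuous and regular $F$-harmonic in every bounded open set, and $u(X_t)\to 1$ $\P_x$-a.s.\ (Lemma~\ref{l:SV-section3}); by Lemma~\ref{l:SV-section3}(c) it then suffices to prove $\inf_x u(x)>0$, i.e.\ $\liminf_{|x|\to\infty}u(x)>0$. The crux is a scale-invariant Harnack inequality for $F$-harmonic functions (Theorem~\ref{t:hi}) with a constant that is \emph{uniform over the family of scaled processes} $X^R$ -- this is where the Fuchsian condition \eqref{e:fuchsian} actually enters, via Lemmas~\ref{l:harmonic-scaling-b}--\ref{l:fuchsian} which show that $u_R:=u(R\,\cdot)$ is regular $\widehat F_R$-harmonic with $\widehat F_R\in I(4^\beta C,\Phi^R(\cdot)^\beta\wedge1)$, and via the key estimate of Lemma~\ref{l:key-lemma} on the conditional gauge integral. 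This yields $u(x)\asymp u(y)$ on each annulus $\overline V(0,2R,2RM)$ with a constant independent of $R$. If $\liminf u=0$ along some $|x_n|\to\infty$, Harnack forces $u$ to be small on whole annuli, and Lemma~\ref{l:infinite-hitting} (the process a.s.\ visits infinitely many of these annuli) then contradicts $u(X_t)\to 1$. Your ``Khas'minskii-type iteration'' intuition is pointed in the right neighbourhood, since gauge arguments are indeed Khas'minskii-flavoured, but the proposal is missing all three pillars that make the argument close: the passage to the gauge $u$ and its $F$-harmonicity, the scale-uniform Harnack inequality, and the annulus-hitting lemma.
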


It is easy to see, cf.~\cite[Remark 4.16]{SV} in the stable case, that there exists $F$ satisfying \eqref{e:fuchsian} such that $\E_x[A_{\infty}]=\infty$. Of course, in this case it cannot hold that $\P_x(A_{\infty}<\infty)=1$. On the other hand, condition \eqref{e:fuchsian} is almost necessary for the validity of Theorem \ref{t:finite-expectation}. We first need the following definition.

\begin{defn}\label{d:iCb}
    Let $C>0$ and let $\tilde F:[0,\infty)\to[0,1]$ be a bounded non-negative function such that $\tilde F(0)=0$. A bounded symmetric function $F:\R^d\times \R^d\to\R$ vanishing on the diagonal is in the class $I(C,\tilde F)$ if
    \begin{equation}\label{eq:iCb}
        |F(x,y)|\le C \tilde F(|x-y|) \quad\text{for all\ } x,y\in \R^d.
    \end{equation}
\end{defn}

\begin{thm}\label{t:infinite-expectation}
Suppose that $X$ is the subordinate Brownian motion via the subordinator whose Laplace exponent is a complete Bernstein function and satisfies \eqref{e:H}. 
For every $\beta>1$ there exist a non-negative bounded function $F\in I(1,\Phi(\cdot)^{\beta}\wedge 1)$ such that $A^F_{\infty}:=\sum_{s>0} F(X_{s-}, X_s)<\infty$ $\P_x$-a.s., but $\E_x [A^F_{\infty}]=\infty$, for all $x\in \R^d$.
\end{thm}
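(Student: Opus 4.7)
The plan is to exhibit $F$ explicitly as a sum $F = \sum_n F_n$, with
$$F_n(x,y) := (\Phi(|x-y|)^\beta \wedge 1) \mathbf{1}_{B_n}(x)\mathbf{1}_{B_n}(y),$$
where $B_n = B(z_n, r_n)$ are pairwise disjoint balls in $\R^d$ with $|z_n| \to \infty$ and $r_n \to \infty$, and the parameters $(r_n, z_n)$ are chosen so that (i) $\sum_n \P_x(X\text{ hits }B_n) < \infty$ and (ii) $\sum_n \E_x[A^{F_n}_\infty] = \infty$. Disjointness of the $B_n$ automatically gives $F \in I(1, \Phi(\cdot)^\beta \wedge 1)$ and symmetry with $F(x,x)=0$. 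From (i), the first Borel--Cantelli lemma yields that $\P_x$-a.s.\ only finitely many $B_n$ are visited; combined with the fact that each $A^{F_n}_\infty$ has finite expectation for fixed $n$ (hence is a.s.\ finite), this gives $A^F_\infty < \infty$ $\P_x$-a.s. By monotone convergence, (ii) gives $\E_x[A^F_\infty] = \infty$.

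To compute $\E_x[A^{F_n}_\infty]$ I would use the L\'evy system (compensation) formula,
$$\E_x[A^{F_n}_\infty] = \int\!\!\int G(x,z) F_n(z,y) j(|z-y|)\, dy\, dz,$$
together with the two-sided bounds $G(x,z) \asymp \Phi(|x-z|)/|x-z|^d$ and $j(r) \asymp 1/(r^d \Phi(r))$, which follow from \eqref{e:H} and the complete Bernstein property of $\phi$. This produces
$$\E_x[A^{F_n}_\infty] \asymp \frac{\Phi(|z_n|)\, r_n^d}{|z_n|^d}\cdot J_n,\qquad J_n := \int_{|w|\le 2r_n}(\Phi(|w|)^\beta \wedge 1)\, j(|w|)\, dw.$$
The integrand of $J_n$ behaves near zero like $|w|^{2\delta_1(\beta-1)-1}$ (using $\Phi(r) \asymp r^{2\delta}$ and $j(r) \asymp r^{-d}/\Phi(r)$), which is integrable \emph{precisely} because $\beta>1$; this is exactly where that hypothesis enters. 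Thus $J_n$ is bounded uniformly above and, once $r_n$ is large, also below by a positive constant. A standard capacity estimate similarly gives $\P_x(X\text{ hits }B_n) \asymp \Phi(|z_n|) r_n^d / (|z_n|^d\, \Phi(r_n))$, so the crucial ratio satisfies
$$\frac{\E_x[A^{F_n}_\infty]}{\P_x(X\text{ hits }B_n)} \asymp \Phi(r_n) \to \infty.$$

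This divergent ratio is what makes the construction possible. For instance, choose $r_n = n^{1/\delta_1}$, so $\Phi(r_n) \gtrsim n^2$, and pick $|z_n|$ so that $\P_x(X\text{ hits }B_n) \asymp n^{-2}$, which is feasible since $\delta_2 < d/2$ forces $\Phi(|z_n|)/|z_n|^d \to 0$ at a polynomial rate; one then gets $\E_x[A^{F_n}_\infty] \asymp 1$, and (i) and (ii) both hold. Disjointness of the balls can be arranged by taking $|z_n|$ slightly larger if needed, or, should the radii $r_n$ outgrow the available radial spacing, by distributing several balls per radius using the $(d-1)$-dimensional sphere; the scaling in \eqref{e:H} leaves enough room. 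All estimates are uniform in $x$ on bounded sets (since the $B_n$ escape to infinity), so the conclusions hold for every $x \in \R^d$. The main obstacle is exactly this quantitative parameter selection --- balancing disjointness, summability of visit probabilities, and divergence of expected contributions all at once --- which is more delicate than in the self-similar stable setting of \cite[Remark 4.16]{SV}, but which the sharp asymptotics afforded by \eqref{e:H} make tractable.
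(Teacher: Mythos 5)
Your proposal is correct and follows essentially the same roadmap as the paper's proof: a sequence of balls escaping to infinity with summable hitting probabilities (Borel--Cantelli gives $A^F_\infty<\infty$ a.s.), together with a direct Green-potential computation showing $\E_x[A^F_\infty]=Gh(x)=\infty$. The difference is in the shape of $F$: the paper takes $F(y,z)=\Phi(|y-z|)^\beta/(\Phi(|y|)^\gamma+\Phi(|z|)^\gamma)$ on balls $B(x_n,r_n)$ with $\Phi(|x_n|)^{1-\gamma}=2^{nd}$, $r_n=2^{-n}|x_n|+1$, and $|y-z|\le 1$; the built-in $\gamma$-damping is what lets the identical construction be recycled in Theorem \ref{t:entropy}(b). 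Your undamped $F_n=(\Phi(|x-y|)^\beta\wedge 1)\mathbf{1}_{B_n}(x)\mathbf{1}_{B_n}(y)$ on disjoint balls suffices for Theorem \ref{t:infinite-expectation} and isolates the mechanism nicely through the ratio $\E_x[A^{F_n}_\infty]/\P_x(T_{B_n}<\infty)\asymp\Phi(r_n)$; your parameter choice does close, since setting $\Phi(|z_n|)/|z_n|^d = n^{-2}\Phi(r_n)/r_n^d$ and applying \eqref{e:H} forces $|z_n|/r_n\gtrsim n^{2/(d-2\delta_1)}\to\infty$, which simultaneously gives disjointness and validates $G(x,z)\asymp\Phi(|z_n|)/|z_n|^d$ on $B_n$. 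Two small points: (1) you get from $\P_0$ to all $\P_x$ by uniformity of estimates, whereas the paper uses that $\{A^F_\infty<\infty\}$ is in the invariant $\sigma$-field $\II$ together with triviality of $\II$ — a slightly cleaner bookkeeping device worth knowing; (2) you write $\E_x[A^{F_n}_\infty]\asymp 1$, but with $\Phi(r_n)$ only bounded below by $a_1 n^2$ you actually get $\E_x[A^{F_n}_\infty]\gtrsim 1$ (the upper bound could grow polynomially when $\delta_1<\delta_2$); the lower bound is all you need for divergence.
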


Proofs of Theorems \ref{t:finite-expectation} and \ref{t:infinite-expectation} follow the roadmap traced in \cite{SV} in the case of the stable process. Of course, due to the fact that neither the jumping kernel nor the Green function are known explicitly in the current situation, and the lack of exact scaling, the proofs are technically more involved and challenging. The main new technical contribution is the proof of Lemma \ref{l:key-lemma}. This lemma is used to prove a Harnack inequality for $F$-harmonic functions with a uniform constant in the class of scaled processes. These results are shown in Section \ref{s:HI} under weak scaling at infinity only. Proofs of Theorems \ref{t:finite-expectation} and \ref{t:infinite-expectation} are given in Sections \ref{s:proof-finite} and \ref{s:proof-infinite} respectively.

As an application of Theorem \ref{t:finite-expectation} we study the question of finiteness of relative entropy of probability measure $\P_x$ and the probability measure induced by a purely discontinuous Girsanov transform of the process $X$. Recall that the subordinate Brownian motion $X=(\Omega, \MM, \MM_t, \theta_t, X_t, \P_x)$ is defined on the Skorokhod path space $\Omega=D([0,\infty), \R^d)$.  

\begin{defn}[\cite{CS03,S}]\label{d:i2x}
    \begin{itemize}
    \item[(a)]\label{d:i2x-a}
    The class $J(X)$ consists of all bounded, symmetric functions $F:\R^d\times\R^d\to\R$ which vanish on the diagonal and satisfy
    $$
        \lim_{t\to 0} \sup_{x\in \R^d} \E_x\left[\int_0^t\int_{\R^d} |F(X_{s-},y)| j(|X_{s-}-y|)\, dy\,  ds\right] = 0.
    $$
    \item[(b)]\label{d:i2x-b}
    The class $I_2(X)$ consists of all bounded, symmetric functions $F:\R^d\times\R^d\to\R$ which vanish on the diagonal and satisfy for all $x\in\R^d$ and $t>0$
    $$
        \E_x\bigg[\sum_{s\le t}F^2(X_{s-},X_s)\bigg]
        =\E_x \left[\int_0^t\!\!\! \int_{\R^d} F^2(X_{s-},y) j(|X_{s-}-y|)\, dy\,  ds\right] <\infty.
    $$
    \end{itemize}
\end{defn}
It is straightforward to see that $J(X)\subset I_2(X)$. It is shown in \cite{S} that for $F\in I_2(X)$ satisfying $\inf_{x,y\in \R^d} F(x,y)>-1$ one can construct a martingale additive functional $M^F=(M^F_t)_{t\ge 0}$ such that the quadratic variation of $M$ is given by 
$$
[M^F]_t=\sum_{0<s\le t}F^2(X_{s-}, X_s)\, .
$$
The Dolean-Dade exponential of $M^F$ defined by $L_t^F:=\EE(M_t^F)$ is under each $\P_x$ a non-negative local martingale, hence a supermartingale. We refer the reader to \cite[Section 2]{S} for details. It is proved in \cite[p.474]{Sh} that there exists a family $(\wt{\P}_x)_{x\in \R^d}$ of (sub)-probability measures on $\MM$ such that
$$
{d\wt{\P}_x}_{|\MM_t}=L_t^F {d\P_x}_{|\MM_t} \quad \text{for all }t\ge 0\, .
$$
Under these measures $X$ is a strong Markov process. We will write $\wt{X}=(\wt{X}_t, \MM. \MM_t, \wt{\P}_x)$ to denote this process and call it the purely discontinuous Girsanov transform of $X$. Since $L_t^F>0$, the measures $\P_x$ and $\wt{\P}_x$ are absolutely continuous on each $\MM_t$. The question when these measures are absolutely continuous on the whole time interval $[0,\infty)$ was answered in \cite[Theorem 1.1]{SV} (for general conservative symmetric right Markov process in $\R^d$). In particular, the relative entropy of measures $\P_x$ and $\wt{\P}_x$, $\HH(\P_x; \wt{\P}_x)<\infty$ if and only if $\E_x\left[\sum_{t>0}F^2(X_{s-}, X_s)\right]<\infty$. 
Recall that the relative entropy of two measures $\nu$ and $\mu$ is defined as
$$
    \HH(\nu; \mu)
    =
    \begin{cases}\displaystyle
        \int \frac{d\nu}{d\mu}\log \frac{d\nu}{d\mu}\, d\mu=\int\log \frac{d\nu}{d\mu} \, d\nu &\text{if\ \ } \nu\ll\mu,\\
         +\infty & \text{otherwise.}
    \end{cases}
$$
By using Theorems \ref{t:finite-expectation} and \ref{t:infinite-expectation} we can prove the following analogs of \cite[Theorems 1.2 and 1.3]{SV}.

\begin{thm}\label{t:entropy}
Suppose that $X$ is the subordinate Brownian motion 
via the subordinator whose Laplace exponent is a complete Bernstein function and satisfies \eqref{e:H}. 
\begin{itemize}
	\item[(a)] Let $F\in I_2(X)$ and $\inf_{x,y\in \R^d}F(x,y)>-1$. Then either $\wt{\P}_x\perp \P_x$ or $\wt{\P}_x\sim \P_x$. If $\wt{\P}_x\sim \P_x$, and if there exist $C>0$ and $\beta>1/2$ such that
	\begin{equation}\label{e:fuchsian-squared-a}
	0\le F(x,y)\le C \frac{\Phi(|x-y|)^{\beta}}{1+\Phi(|x|)^{\beta}+\Phi(|y|)^{\beta}}\, , \quad \text{for all }x,y\in \R^d\, ,
	\end{equation}
	then $\HH(\P_x; \wt{\P}_x)<\infty$.
	\item[(b)] For each $\gamma$ and $\beta$ satisfying $0<\gamma <1/2<\beta$ there exists $F\in I_2(X)$ such satisfying
	\begin{equation}\label{e:fuchsian-squared-b}
	F(x,y)\le \frac12 \frac{\Phi(|x-y|)^{\beta}}{1+\Phi(|x|)^{\gamma}+\Phi(|y|)^{\gamma}}\, , \quad \text{for all }x,y\in \R^d\, ,
	\end{equation}
	such that $\P_x\ll \widetilde{\P}_x$ and $\HH(\P_x; \widetilde{\P}_x)=\infty$.
\end{itemize}
\end{thm}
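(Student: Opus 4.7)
The plan is to follow the same roadmap as \cite[Theorems 1.2 and 1.3]{SV} in the stable case, using Theorems \ref{t:finite-expectation} and \ref{t:infinite-expectation} as the new technical inputs and the cited characterization $\HH(\P_x;\wt{\P}_x)<\infty \iff \E_x[\sum_{s>0}F^2(X_{s-},X_s)]<\infty$ as the bridge between relative entropy and additive functionals. For part (a), the dichotomy $\wt{\P}_x\perp \P_x$ versus $\wt{\P}_x\sim \P_x$ is a general fact about Girsanov supermartingales of purely discontinuous type which holds for any conservative symmetric right Markov process, so I would simply invoke the general statement from \cite[Theorem 1.1]{SV} (its proof uses no scaling). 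Under $\wt{\P}_x\sim \P_x$, the Dolean-Dade exponential $L_\infty^F$ is strictly positive $\P_x$-a.s., and a standard argument for purely discontinuous exponential martingales yields $\sum_{s>0}F^2(X_{s-},X_s)<\infty$ $\P_x$-a.s. The key observation is that, by $(1+a+b)^2\ge 1+a^2+b^2$ for $a,b\ge 0$,
\[
F^2(x,y)\le C^2\frac{\Phi(|x-y|)^{2\beta}}{(1+\Phi(|x|)^{\beta}+\Phi(|y|)^{\beta})^2}\le C^2\frac{\Phi(|x-y|)^{2\beta}}{1+\Phi(|x|)^{2\beta}+\Phi(|y|)^{2\beta}},
\]
so $F^2$ is bounded, symmetric, nonnegative and satisfies \eqref{e:fuchsian} with exponent $2\beta>1$. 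Applying Theorem \ref{t:finite-expectation} to $F^2$ then gives $\sup_{x\in\R^d}\E_x[\sum_{s>0}F^2(X_{s-},X_s)]<\infty$, hence $\HH(\P_x;\wt{\P}_x)<\infty$.

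For part (b), I would apply Theorem \ref{t:infinite-expectation} with exponent $2\beta>1$ to obtain a bounded nonnegative $F_0\in I(1,\Phi(\cdot)^{2\beta}\wedge 1)$ with $A^{F_0}_\infty<\infty$ $\P_x$-a.s.\ but $\E_x[A^{F_0}_\infty]=\infty$. The construction to be given in Section \ref{s:proof-infinite} should be localizable by an explicit spatial cutoff of the form $(1+\Phi(|x|)^{\gamma}+\Phi(|y|)^{\gamma})^{-2}$, and after rescaling constants $F_0$ can be taken to satisfy the refined bound $F_0(x,y)\le \tfrac{1}{4}\Phi(|x-y|)^{2\beta}/(1+\Phi(|x|)^{\gamma}+\Phi(|y|)^{\gamma})^2$. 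Setting $F:=\sqrt{F_0}$ then yields a bounded nonnegative symmetric function vanishing on the diagonal and satisfying \eqref{e:fuchsian-squared-b}. Membership $F\in I_2(X)$ reduces to the finiteness of $\int_{\R^d}F_0(x,y)j(|x-y|)\,dy$ for every $x$, which follows from the bound on $F_0$ and the known upper estimate on the jumping kernel $j$ under \eqref{e:H}. Since $\sum_{s>0}F^2(X_{s-},X_s)=A^{F_0}_\infty<\infty$ $\P_x$-a.s., the general dichotomy from \cite[Theorem 1.1]{SV} gives $\P_x\ll\wt{\P}_x$ (rather than mutual singularity), and the same characterization combined with $\E_x[A^{F_0}_\infty]=\infty$ yields $\HH(\P_x;\wt{\P}_x)=\infty$.

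The main obstacle is the construction in part (b): one needs $F_0$ to simultaneously (i) carry the two-variable denominator decay with the smaller exponent $\gamma$, (ii) satisfy $A^{F_0}_\infty<\infty$ $\P_x$-a.s.\ even though that spatial decay is too weak to invoke Theorem \ref{t:finite-expectation}, and (iii) still have $\E_x[A^{F_0}_\infty]=\infty$. Achieving all three requires rerunning the construction behind Theorem \ref{t:infinite-expectation} with the extra spatial weight (carefully choosing the supports and magnitudes of the building blocks along a sequence of annuli escaping to infinity) rather than citing Theorem \ref{t:infinite-expectation} as a black box; this is precisely the place where the lack of exact scaling is felt and where the global scaling \eqref{e:H} must be used to control both the survival of the series defining $A^{F_0}_\infty$ and the divergence of its expectation. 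Part (a) is essentially mechanical once Theorem \ref{t:finite-expectation} is in hand, modulo the passage from $\wt{\P}_x\sim\P_x$ to $\P_x$-a.s.\ finiteness of $[M^F]_\infty$.
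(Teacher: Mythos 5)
Your overall roadmap matches the paper's proof: in (a), bound $F^2$ by the Fuchsian condition with exponent $2\beta>1$ (via $(1+a+b)^2\ge 1+a^2+b^2$), apply Theorem~\ref{t:finite-expectation} to $F^2$, and pass through the relative-entropy characterization of \cite{SV}; in (b), rerun the construction behind Theorem~\ref{t:infinite-expectation} with exponents $2\gamma$ and $2\beta$ so that the spatial weight is already built into the building block $H(x,y)=\Phi(|x-y|)^{2\beta}/(\Phi(|x|)^{2\gamma}+\Phi(|y|)^{2\gamma})$ on the balls $B(x_n,r_n)$, then set $F=\tfrac18\sqrt{H}$ so that $\sum F^2=\tfrac{1}{64}\sum H$ inherits a.s.\ finiteness but infinite expectation. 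Your step of taking a square root of the building block is exactly what the paper does, and your worry that one cannot invoke Theorem~\ref{t:infinite-expectation} as a black box (but must reuse its internal construction) is well placed and handled correctly.

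There is, however, a genuine gap at the start of (a). You assert that the dichotomy $\wt{\P}_x\perp\P_x$ or $\wt{\P}_x\sim\P_x$ ``is a general fact \dots which holds for any conservative symmetric right Markov process'' and can be quoted for free. It cannot: the dichotomy in \cite{SV} (Corollary 2.13 there) requires the invariant $\sigma$-field $\II$ to be trivial with respect to \emph{both} $\P_x$ and $\wt{\P}_x$, and this is not automatic. The paper devotes the first half of Section~\ref{s:proof-entropy} to establishing precisely this (Lemma~\ref{l:invariant-trivial}). The chain of estimates is non-trivial: boundedness of $F$ gives comparability of the jumping kernels $j$ and $\tilde j$ (\eqref{e:j-comparison}); the global scaling~\eqref{e:H} then lets one invoke the heat kernel bounds of \cite{CK08} for \emph{both} $X$ and $\wt X$, yielding $\wt p(t,x,y)\asymp p(t,x,y)$ (\eqref{e:comparison-p}) and hence $\wt G\asymp G$ (\eqref{e:comparison-G}); combined with \cite[Lemma A.1]{MV} (bounded harmonic functions in $\R^d$ are constant, so the Martin boundary is a point and $G(x,y)/G(0,y)\to 1$, \eqref{e:Limit-at-infty}), this lets the proof of \cite[Lemma 5.1]{SV} go through. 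Without these ingredients the dichotomy, the a.s.\ finiteness of $[M^F]_\infty$ under $\wt{\P}_x\sim\P_x$, and the one-sided absolute continuity $\P_x\ll\wt{\P}_x$ in (b) are all unjustified. You should flag that the triviality of $\II$ needs to be proved here and sketch how the scaling hypothesis enters through heat kernel and Green function comparability.
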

The proof of Theorem \ref{t:entropy} is given in Section \ref{s:proof-entropy}.

We end this introduction by mentioning another generalization of \cite{SV} that came to our attention after finishing this paper. In his master thesis \cite{Nab}, the author studies the situation when the additive functional is a sum of a continuous  and a purely discontinuous additive functional, and generalizes \cite[Theorem 1.1]{SV} to this setting. Under two additional conditions, \emph{unavoidable set condition} and \emph{uniform Harnack inequality}, he also generalizes \cite[Theorem 1.2]{SV}. These results are disjoint from the ones presented in this paper.

%%%%%%%%%%%%%%%%%%%%%%%%%%%%%%%%%%%%%%%%%%%%%%%%%%%%%%%%%%%%%%%%%%%%%%%%%%%%%%%%%%%%
%%%%%%%%%%%%                                       Harnack inequality for $F$-harmonic functions                                  %%%%%%%%%%%%%%%%%%%%%%%%%%
%%%%%%%%%%%%%%%%%%%%%%%%%%%%%%%%%%%%%%%%%%%%%%%%%%%%%%%%%%%%%%%%%%%%%%%%%%%%%%%%%%%%
\section{Harnack inequality for $F$-harmonic functions}\label{s:HI}
As in the introduction, let $W=(W_t, \P_x)$ be the standard $d$-dimensional Brownian motion and  $S=(S_t)$ an independent subordinator with the Laplace exponent $\phi$, no drift and the infinite L\'evy measure $\mu(dt)$. 
Without loss of generality we assume that $\phi(1)=1$. The subordinate Brownian motion $X$ is defined by $X_t:=W_{S_t}$. It is an isotropic L\'evy process with the characteristic exponent $\psi(\xi)=\phi(|\xi|^2)$, $\xi\in \R^d$, and the L\'evy measure $\nu(dx)=j(x)dx$ where the radial density $j(x)=j(|x|)$ is given by 
$$
j(r)=\int_0^{\infty}(4\pi t)^{-d/2}e^{-\frac{r^2}{4t}}\mu(dt)\, .
$$
The process $X$ has continuous transition densities given by
$$
p(t,x,y)=\int_{(0,\infty)}(4\pi s)^{-d/2}e^{-\frac{|x-y|^2}{4s}}\, \P(S_t\in ds),
$$
and is therefore strongly Feller.

We recall that for every Bernstein function $\phi$ it holds that
\begin{equation}\label{e:phi}
  (1\wedge\lambda)\phi(x)\leq \phi(\lambda x)\leq(1\vee \lambda)\phi(x),\quad \lambda >0,\,x>0.
\end{equation}
In this section we assume that $\phi$ satisfies the following weak scaling condition at infinity: There exist constants  $0<\delta_1\leq \delta_2<1\wedge  \frac{d}{2}$ and $a_1,a_2>0$  such that
\begin{equation}\label{e:H-infty}
   a_1\lambda^{\delta_1}\leq\frac{\phi(\lambda x)}{\phi(x)}\leq a_2\lambda^{\delta_2},\quad \lambda\geq 1, x>1. 
\end{equation}
Note that this condition is weaker than the one in \eqref{e:H}.  Furthermore, we assume that $X$ is transient.
According to the Chung-Fuchs type criterion this is equivalent to
\begin{equation}\label{e:cf-criterion}
\int_0^1 \frac{s^{\frac{d}{2}-1}}{\phi(s)}\, ds <\infty\, ,
\end{equation}
and imposes an additional assumption only in cases $d=1$ and $d=2$.

Recall that $\Phi(s)=\phi(s^{-2})^{-1}$. When $\phi$ satisfies \eqref{e:H-infty}, then 
\begin{equation}\label{e:lower-bound-Phi}
\Phi(s)\le a_1^{-1}s^{2\delta_1}\, , \quad \text{for all }s\in (0,1]\, .
\end{equation}

Let $B_r=B(0,r)$ be the ball of radius $r>0$ centered at the origin, $\delta_{B_r}(x)$ the distance of $x$ to the boundary $\partial B_r$,  and let $G_{B_r}$ denote the Green function of the process $X$ killed upon exiting $B_r$. Let further,
$$
P_{B_r}(x,z)=\int_{B_r}G_{B_r}(x,y)j(|y-z|)\, dy\, , \qquad x\in B_r, z\in \overline{B_r}^c,
$$
be the Poisson kernel of the ball $B_r$. We will need the following three estimates of the L\'evy density $j(r)$, Green function $G_{B_r}$ and the Poisson kernel $P_{B_r}$: If $\phi$ is a complete Bernstein function satisfying \eqref{e:H-infty}, then there exist constants $C_1\ge 1$, $C_2\ge 1$ and $C_3>0$, depending only on dimension $d$ and the constants $a_1,a_2, \delta_1, \delta_2$ from \eqref{e:H-infty}, such that for all $r\in (0,1]$,
\begin{equation}
C_1^{-1} r^{-d}\Phi(r)^{-1}\le j(r) \le C_1 r^{-d}\Phi(r)^{-1}\, , \label{e:estimate-of-j}
\end{equation}
\begin{eqnarray}
\lefteqn{ C_2^{-1} \frac{\Phi(|x-y|)}{|x-y|^d}\left(1\wedge\frac{\Phi(\delta_{B_r}(x))^\frac 1 2\Phi(\delta_{B_r}(y))^\frac 1 2}{\Phi(|x-y|)}\right) \le G_{B_r}(x,y)}\label{e:estimate-of-G}\\
&\le & C_2 \frac{\Phi(|x-y|)}{|x-y|^d}\left(1\wedge\frac{\Phi(\delta_{B_r}(x))^\frac 1 2\Phi(\delta_{B_r}(y))^\frac 1 2}{\Phi(|x-y|)}\right), \quad x,y\in B_r, \nonumber 
\end{eqnarray}
\begin{equation}
P_r(0,z) \ge C_3 j(|z|) \Phi(r)\, , \qquad z\in \overline{B_r}^c\, \label{e:estimate-of-P}
\end{equation}
See \cite[Lemma 3.2]{KSV14} for \eqref{e:estimate-of-j}, \cite[Theorem 1.2]{KM} for \eqref{e:estimate-of-G}  (cf.~also \cite[Proposition 7.5]{KSV16}), and \cite[Lemma 2.6]{KSV15} for \eqref{e:estimate-of-P}. Note that in those results it is stated that the constant depends on $\phi$, but a closer inspection of the proof reveals that the dependence on $\phi$ is only through the constants in \eqref{e:H-infty}.
As a consequence of \eqref{e:estimate-of-G} we have the following estimates.

\begin{lemma}\label{l:green}
There exists a constant $C_4=C_4(a_1, a_2, \delta_1, \delta_2,d)$ such that for every $r\in (0, 1]$,
    \begin{gather}\label{e:3G}
        \frac{G_{B_r}(x,y)G_{B_r}(z,w)}{G_{B_r}(x,w)}
        \leq
        \begin{cases}
            \dfrac{C_4\,\Phi(|x-y|)\Phi(|z-w|)}{\Phi(|x-w|)}\dfrac{|x-w|^{d}}{|x-y|^{d}|z-w|^{d}},         & (x,w)\in E_r,\\[\bigskipamount]
            C_4\,\Phi(|x-y|)^\frac 1 2\Phi(|z-w|)^\frac 1 2\dfrac{|x-w|^{d}}{|x-y|^{d}|z-w|^{d}},   & (x,w)\not\in E_r,
        \end{cases}
    \end{gather}
    where $E_r=\{(x,w)\in B_r\times B_r:|x-w|\leq\frac12 \max\{\delta_{B_r}(x), \delta_{B_r}(w)\}\}$.
\end{lemma}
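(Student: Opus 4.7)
The plan is to deduce the lemma directly from the two-sided Green function estimate \eqref{e:estimate-of-G}. Writing $h(u) := \Phi(\delta_{B_r}(u))^{1/2}$ for brevity, \eqref{e:estimate-of-G} reads
\[
G_{B_r}(u,v) \asymp \frac{\Phi(|u-v|)}{|u-v|^d}\left(1 \wedge \frac{h(u)h(v)}{\Phi(|u-v|)}\right),
\]
and I will record two observations that drive everything. (A) If $(u,v)\in E_r$, then the triangle inequality forces $\delta_{B_r}(u),\delta_{B_r}(v)\ge |u-v|$, so by monotonicity $h(u)\wedge h(v)\ge \Phi(|u-v|)^{1/2}$ and hence $G_{B_r}(u,v)\asymp \Phi(|u-v|)/|u-v|^d$. (B) If $(u,v)\notin E_r$, then $\delta_{B_r}(u),\delta_{B_r}(v)<2|u-v|$; combined with the weak scaling $\Phi(2s)\le c_{\Phi}\Phi(s)$ on $(0,1]$ (a standard consequence of \eqref{e:H-infty}), this yields $h(u)\vee h(v)\le c\Phi(|u-v|)^{1/2}$, and the elementary inequality $1\wedge a \ge a/(1+c^{2})$ valid for $a\le c^{2}$ then gives $G_{B_r}(u,v) \gtrsim h(u)h(v)/|u-v|^d$.

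Case 1, $(x,w)\in E_r$, is immediate from (A): the lower bound $G_{B_r}(x,w)\gtrsim \Phi(|x-w|)/|x-w|^d$, combined with the trivial upper bounds $G_{B_r}(u,v)\le C\Phi(|u-v|)/|u-v|^d$ applied to the two numerator factors, produces the asserted inequality after forming the ratio.

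For Case 2, $(x,w)\notin E_r$, apply (B) to obtain $G_{B_r}(x,w)\gtrsim h(x)h(w)/|x-w|^d$, and split into four sub-cases depending on whether $(x,y)\in E_r$ and whether $(z,w)\in E_r$. In each sub-case I bound the numerator by choosing between the two always-valid estimates $G_{B_r}(u,v)\le C\Phi(|u-v|)/|u-v|^d$ and $G_{B_r}(u,v)\le C h(u)h(v)/|u-v|^d$ so that the factors $h(x),h(w)$ in the denominator cancel against those produced in the numerator. Any surviving $h(y)$ or $h(z)$ coming from an "outer" pair lying outside $E_r$ is replaced via (B) by $c\Phi(|x-y|)^{1/2}$ or $c\Phi(|z-w|)^{1/2}$; conversely, when an outer pair lies in $E_r$, (A) provides a lower bound $h(x)\ge \Phi(|x-y|)^{1/2}$ or $h(w)\ge \Phi(|z-w|)^{1/2}$ which is absorbed in the denominator to furnish a $\Phi^{1/2}$. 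For instance, in the fully-outer sub-case $(x,y),(z,w)\notin E_r$ one has $G_{B_r}(x,y)\le C c\,h(x)\Phi(|x-y|)^{1/2}/|x-y|^d$ and analogously for $G_{B_r}(z,w)$, after which $h(x)h(w)$ cancels exactly against the denominator and leaves precisely $\Phi(|x-y|)^{1/2}\Phi(|z-w|)^{1/2}$.

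The main obstacle is the Case 2 bookkeeping: in each of the four sub-cases the correct pairing of upper bounds with applications of (A) or (B) must be chosen so that the boundary factors cancel cleanly and so that the surviving factor is exactly the tighter $\Phi(|x-y|)^{1/2}\Phi(|z-w|)^{1/2}$ rather than the larger $\Phi\Phi/\Phi$ form of Case 1 — the latter being unavailable here because in Case 2 only the weaker denominator lower bound $h(x)h(w)/|x-w|^d$ is at our disposal. The calculations themselves within each sub-case are short once the correct choice has been made.
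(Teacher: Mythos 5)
Your proposal is correct and follows essentially the same route as the paper: both start from the two-sided Green function estimate \eqref{e:estimate-of-G} and split on whether $(x,w)\in E_r$, using exactly the observations that inside $E_r$ the minimum is achieved by $\Phi(|x-w|)$ while outside it is comparable to $\Phi(\delta_{B_r}(x))^{1/2}\Phi(\delta_{B_r}(w))^{1/2}$. The only difference is organizational: where you split Case 2 into four sub-cases depending on $(x,y)$ and $(z,w)$, the paper instead proves a single unconditional upper bound $G_{B_r}(x,y)\le c\,\Phi(|x-y|)^{1/2}\Phi(\delta_{B_r}(x))^{1/2}/|x-y|^d$ (via $\delta_{B_r}(y)\le 2(\delta_{B_r}(x)\vee|x-y|)$) and applies it to both numerator factors, which avoids the sub-case bookkeeping.
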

\begin{proof} 
We use the Green function estimates \eqref{e:estimate-of-G}. First, let $(x,w)\in E_r$ and without loss of generality let $\delta_{B_r}(w)\leq \delta_{B_r}(x)$. From \eqref{e:estimate-of-G} we get
    \begin{align*}
     \frac{G_{B_r}(x,y)G_{B_r}(z,w)}{G_{B_r}(x,w)}\leq c_1
     \frac{\Phi(|x-y|)\Phi(|z-w|)}{\Phi(|x-w|)\wedge(\Phi(\delta_{B_r}(x))^\frac 1 2\Phi(\delta_{B_r}(w))^\frac 1 2)}\frac{|x-w|^d}{|x-y|^d|z-w|^d}.
    \end{align*} 
    Since $\frac 1 2\delta_{B_r}(x)\geq|x-w|\geq \delta_{B_r}(x)-\delta_{B_r}(w)$, it follows that $\delta_{B_r}(w)\geq \frac 1 2 \delta_{B_r}(x)$ and therefore,
    \[
     \Phi(|x-y|)\leq \Phi(\tfrac 1 2 \delta_{B_r}(x))\leq \Phi(\delta_{B_r}(x))^\frac 1 2\Phi(\delta_{B_r}(w))^\frac 1 2.
    \]
    This implies \eqref{e:3G} in the first case. On the other hand, if $(x,w)\not\in E_r$ then
     \[
     \Phi(|x-y|) \geq \Phi(\tfrac 1 2\delta_{B_r}(x))^\frac 1 2\Phi(\tfrac 1 2\delta_{B_r}(w))^\frac 1 2\overset{\eqref{e:phi}}{\geq}\tfrac 1 4\Phi(\delta_{B_r}(x))^\frac 1 2\Phi(\delta_{B_r}(w))^\frac 1 2.
    \]
    Therefore, by \eqref{e:estimate-of-G}
    \[
   G_{B_r}(x,w)\geq c_2\frac{\Phi(\delta_{B_r}(x))^{\frac 1 2}\Phi(\delta_{B_r}(w))^{\frac 1 2}}{|x-w|^d}.
  \]
  Furthermore, since $\delta_{B_r}(y)\leq 2(\delta_{B_r}(x)\vee|x-y|)$, \eqref{e:estimate-of-G} implies that
  \[
   G_{B_r}(x,y)\leq c_3\frac{\Phi(|x-y|)^{\frac 1 2}\Phi(\delta_{B_r}(x))^{\frac 1 2}}{|x-y|^d}.
  \]
  and analogously, 
  \[
   G_{B_r}(z,w)\leq c_3\frac{\Phi(|z-w|)^{\frac 1 2}\Phi(\delta_{B_r}(w))^{\frac 1 2}}{|z-w|^d}.
  \]
  Using these three estimates of the Green function, we get \eqref{e:3G} in the second case. \end{proof}

\begin{lemma}\label{l:kato-class}
Let $\tilde{F}:[0,\infty)\to [0,1]$ be a non-negative bounded function such that $\tilde{F}(0)=0$ and 
\begin{equation}\label{e:tilde-F-Phi}
\int_0^1 \frac{\tilde{F}(s)}{\Phi(s)s}\, ds <\infty\, .
\end{equation}

\noindent
(a) If $\tilde{A}_t:=\sum_{0<s\le t}\tilde{F}(|X_s-X_{s-}|)$, then $\E_x \tilde{A}_t=ct$ where $c=\int_{\R^d} \tilde{F}(|y|)j(|y|)dy$. Consequently, 
$\lim_{t\to 0} \sup_{x\in \R^d} \E_x \tilde{A}_t =0$. 

\noindent
(b) Let $C>0$, suppose that  $F\in I(C,\tilde F)$ and define $A_t^F:=\sum_{0<s\le t} F(X_{s-},X_s)$.  Then $\lim_{t\to 0} \sup_{x\in \R^d} \E_x |A_t^F| =0$, i.e., $F\in J(X)$.
\end{lemma}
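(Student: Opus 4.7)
The plan is to invoke the compensation formula (L\'evy system) for the L\'evy process $X$ to turn the sum over jumps into a double integral, and then to exploit the integrability assumption \eqref{e:tilde-F-Phi} together with the L\'evy density estimate \eqref{e:estimate-of-j} to control the resulting constant.

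For part (a), the L\'evy system formula applied to the non-negative function $g(x,y)=\tilde F(|y-x|)$ yields
$$
\E_x\tilde A_t=\E_x\left[\int_0^t\int_{\R^d}\tilde F(|y|)\,j(|y|)\,dy\,ds\right]=ct,
$$
by spatial homogeneity of $X$. The only substantive point is to verify $c<\infty$. I would split $\R^d=\{|z|\le 1\}\cup\{|z|>1\}$. On the unbounded piece $\tilde F\le 1$ and $\int_{|z|>1}j(|z|)\,dz<\infty$ because $j(|z|)\,dz$ is a L\'evy measure. On the small-jump piece, apply \eqref{e:estimate-of-j} in spherical coordinates:
$$
\int_{|z|\le 1}\tilde F(|z|)\,j(|z|)\,dz\le C_1\omega_{d-1}\int_0^1\frac{\tilde F(r)}{\Phi(r)\,r}\,dr,
$$
which is finite by the assumption \eqref{e:tilde-F-Phi}. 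Uniformity $\sup_{x\in\R^d}\E_x\tilde A_t=ct\to 0$ is then immediate.

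For part (b), the pointwise bound $|F(x,y)|\le C\tilde F(|x-y|)$ from the definition of $I(C,\tilde F)$ gives the trivial domination $\sum_{0<s\le t}|F(X_{s-},X_s)|\le C\tilde A_t$. Taking expectations and applying part (a),
$$
\sup_{x\in\R^d}\E_x\left[\sum_{0<s\le t}|F(X_{s-},X_s)|\right]\le Cct\xrightarrow[t\to 0]{}0.
$$
A second application of the compensation formula to $|F|$ identifies the left-hand side with $\sup_x\E_x\!\left[\int_0^t\!\int_{\R^d}|F(X_{s-},y)|\,j(|X_{s-}-y|)\,dy\,ds\right]$, which is precisely the quantity appearing in the definition of $J(X)$, so $F\in J(X)$.

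I expect the only non-routine step to be checking $c<\infty$, which is where condition \eqref{e:tilde-F-Phi} plays its role via the sharp two-sided estimate \eqref{e:estimate-of-j} of the L\'evy density at the origin; everything else is bookkeeping via the L\'evy system and the elementary domination $|F|\le C\tilde F$.
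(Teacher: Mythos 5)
Your proof is correct and takes essentially the same route as the paper: apply the L\'evy system formula to reduce $\E_x\tilde A_t$ to $ct$ by spatial homogeneity, verify $c<\infty$ by splitting into $\{|z|\le 1\}$ and $\{|z|>1\}$ using \eqref{e:estimate-of-j} with hypothesis \eqref{e:tilde-F-Phi} and L\'evy-measure integrability respectively, and deduce (b) from (a) via the domination $|F|\le C\tilde F$ built into the definition of $I(C,\tilde F)$.
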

\begin{proof} (a) By the the L\'evy system formula we have
    \begin{align*}
        \E_x [\tilde{A}_t]
        = \E_x \left[\sum_{s\le t} \tilde{F}(|X_{s}-X_{s-}|) \right]
        &= \E_x \left[\int_0^t\!\!\! \int_{\R^d} \tilde{F}(|y-X_{s-}|) j(|y-X_{s-}|)\, dy \, ds \right]\\
        &= \E_x \left[\int_0^t h(X_s)\, ds\right]
    \end{align*}
    where
    \begin{align*}
        h(y)
        &:=\int_{\R^d} \tilde{F}(|y-z|)j(|y-z|)\, dz =\int_{\R^d} \tilde{F}(|z|)j(|z|)\, dz \\
        &\leq c_1\int_{|z|\le 1} \dfrac{\tilde F(|z|)}{\Phi(|z|)|z|^d}\,dz +\int_{|z|>1} j(|z|)\, dz = c_2 <\infty.
    \end{align*}
    Here the second line follows from \eqref{e:estimate-of-j} and the assumption \eqref{e:tilde-F-Phi}.
    Therefore, $\E_x [\tilde{A}_t] =ct $ with $c:=\int_{\R^d} \tilde{F}(|y|)j(|y|)dy$.

\noindent (b) This follows directly from (a).   \end{proof}

By using \eqref{e:lower-bound-Phi}, it is straightforward to check that for any $\beta>1$, the function $\tilde{F}(s):=\Phi(s)^{\beta}\wedge 1$ satisfies the assumptions of the previous lemma.

\begin{lemma}\label{l:key-lemma}
    Let $\beta>1$, $C>0$ and $\tilde F(s):=\Phi(s)^\beta \wedge 1$. For every $\varepsilon>0$ there exists a constant $r_0=r_0(d,a_1,a_2,\delta_1,\delta_2,\beta,C,\varepsilon)\in (0,1]$ such that for every $r\in (0,r_0]$ and $F\in I(C,\tilde F)$ 
    $$
        \sup_{x,w\in B_r} \int_{B_r}\int_{B_r} \frac{G_{B_r}(x,y)G_{B_r}(z,w)}{G_{B_r}(x,w)}\, |F(y,z)| j(|y-z|)\, dz\, dy <\varepsilon.
    $$
\end{lemma}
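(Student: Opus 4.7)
My approach is to combine the 3G estimate of Lemma~\ref{l:green}, the upper bound on $j$ in \eqref{e:estimate-of-j}, the explicit form of $\tilde F$, and the weak scaling of $\Phi$ from \eqref{e:H-infty}. First I would shrink $r_0$ so that $\Phi(2r_0)\le 1$; this is possible by \eqref{e:lower-bound-Phi}. Then for any $y,z\in B_r$ with $r\le r_0$ we have $\tilde F(|y-z|)=\Phi(|y-z|)^\beta$, and hence
\[
|F(y,z)|\,j(|y-z|)\le CC_1\,\frac{\Phi(|y-z|)^{\beta-1}}{|y-z|^d}.
\]
Since $\beta>1$, the factor $\Phi(|y-z|)^{\beta-1}$ is what will ultimately produce the $\varepsilon$.

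I would then split the supremum according to whether $(x,w)\in E_r$ or not, inserting the corresponding bound from \eqref{e:3G}. In the diagonal case $(x,w)\in E_r$, the integrand reduces to a constant multiple of
\[
\frac{|x-w|^d}{\Phi(|x-w|)}\,\frac{\Phi(|x-y|)\,\Phi(|z-w|)\,\Phi(|y-z|)^{\beta-1}}{|x-y|^d\,|z-w|^d\,|y-z|^d}.
\]
Using $\Phi(s)\le a_1^{-1}s^{2\delta_1}$ on the three $\Phi$-factors in the numerator and $\Phi(s)\ge a_2^{-1}s^{2\delta_2}$ on the $\Phi(|x-w|)$ in the denominator, this is dominated by a product of negative powers of $|x-y|,|z-w|,|y-z|$ multiplied by a prefactor $|x-w|^{d-2\delta_2}$. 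The resulting double integral I would evaluate by two successive applications of the fractional-integral convolution identity $\int_{\R^d}|u-a|^{-\alpha}|u-b|^{-\gamma}\,du\asymp|a-b|^{d-\alpha-\gamma}$, valid when $\alpha,\gamma\in(0,d)$ and $\alpha+\gamma>d$, after truncating to $B_r$ and, where necessary, splitting the integration into annular regions in order to satisfy the hypotheses. This yields a bound of the form $c|x-w|^q$ with $q>0$, and since $|x-w|\le r/2$ in this case, the bound is $\le cr^q$. The off-diagonal case $(x,w)\notin E_r$ I would handle similarly using the second bound of \eqref{e:3G}, which decouples the $y,z$-dependence apart from the $|y-z|^{-d}$ singularity; moreover $|x-w|$ is then comparable to a positive power of $r$, so the analogous estimate again yields $cr^{q'}$ with $q'>0$.

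The main obstacle I anticipate is the bookkeeping of exponents across the subregimes determined by $\beta$: the convolution identity requires each exponent to lie in $(0,d)$ and their sum to exceed $d$, so depending on whether $2\delta_1\beta<d$ or not, one may need to split $B_r$ into annular shells around $x$ and $w$ and, on certain parts of the domain, replace the crude bound $\Phi(|y-z|)^{\beta-1}\le c|y-z|^{2\delta_1(\beta-1)}$ by its lower counterpart in order to keep the total power of $r$ in the final bound strictly positive. The condition $\delta_2<d/2$ secures local integrability at every singularity, while $\beta>1$ provides the positive-power margin that makes the entire argument go through.
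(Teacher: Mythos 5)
Your setup is right (combine the 3G estimate, the $j$-estimate, and $\tilde F = \Phi^\beta$ for $r_0$ small), but the central step --- replacing each $\Phi$-factor by a pure power and invoking the fractional-integral convolution identity --- has a genuine gap. In the case $(x,w)\in E_r$, you bound the numerator $\Phi$'s from above by $a_1^{-1}s^{2\delta_1}$ and the denominator $\Phi(|x-w|)$ from below by $a_2^{-1}|x-w|^{2\delta_2}$, yielding a prefactor $|x-w|^{d-2\delta_2}$. Two convolutions over $B_r$ then produce (in the regime where they are both applicable) a factor $|x-w|^{2\delta_1(1+\beta)-d}$, so the total $|x-w|$-power is $q = 2\delta_1(1+\beta)-2\delta_2$. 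This is positive only when $\beta > \delta_2/\delta_1 - 1$, which is \emph{not} guaranteed for all $\beta>1$ when $\delta_2\ge 2\delta_1$. For instance, $d=1$, $\delta_1=0.1$, $\delta_2=0.4$, $\beta=1.5$ gives $q=-0.3$, so your claimed bound $c\,|x-w|^{q}$ blows up as $x\to w$ and the supremum over $B_r\times B_r$ is useless. The mismatch comes precisely from decoupling the $\Phi(|x-w|)$ in the denominator from the $\Phi$'s in the numerator before integrating: the lower index $\delta_1$ appears upstairs and the upper index $\delta_2$ downstairs, and the loss is irrecoverable by rebalancing $\Phi(|y-z|)^{\beta-1}$ or splitting into annuli, since the offending factor does not involve $|y-z|$ at all.

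The paper's proof avoids this entirely by never prying $\Phi(|x-w|)$ loose. Instead it observes that $s\mapsto s^d\phi(s^{-2}) = s^d/\Phi(s)$ is (up to constants) increasing and subadditive (the inequalities \eqref{e:phi-rd-incr}, \eqref{e:phi-rd-incr-d1}, \eqref{e:phi-rd-triangle}). Combined with the triangle inequality $|x-w|\le|x-y|+|y-z|+|z-w|$, this gives
\[
\frac{|x-w|^d}{\Phi(|x-w|)} \;\lesssim\; \frac{|x-y|^d}{\Phi(|x-y|)}+\frac{|y-z|^d}{\Phi(|y-z|)}+\frac{|z-w|^d}{\Phi(|z-w|)}.
\]
Substituting this into the 3G bound makes the three pieces cancel cleanly against $\Phi(|x-y|)$, $\Phi(|z-w|)$ and the $|x-y|^{-d}$, $|z-w|^{-d}$ singularities, leaving three integrals $I_1,I_2,I_3$ (and $I_4,I_5,I_6$ in the off-diagonal case, obtained from $|x-w|^d\lesssim|x-y|^d+|y-z|^d+|z-w|^d$) each of which decouples and is bounded by a constant times $\Phi(r)^\beta$, hence by $a_1^{-1}r^{2\delta_1\beta}$, a positive power of $r$ for \emph{every} $\beta>1$. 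This subadditivity decomposition is the missing idea; without it the exponent-bookkeeping you describe cannot be made to close for $\beta$ near $1$ when $\delta_2/\delta_1$ is large.
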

\begin{proof}    
    The ratio of Green functions in the integral above is by Lemma \ref{l:green} less than or equal to the sum of two expressions on the right-hand side of \eqref{e:3G}.  Hence it suffices to separately estimate the integral when the ratio is replaced by either of the two expressions.
    
    Let $0<s\leq t\leq 1$. First note that it follows from \eqref{e:phi} that for $d\geq 2$,
    \begin{equation}\label{e:phi-rd-incr}
     \phi(s^{-2})s^d\leq\frac{t^2}{s^2}\phi(t^{-2})s^d\leq \phi(t^{-2})t^d,
    \end{equation}
and in case of $d=1$ it follows from $\eqref{e:H-infty}$ that 
    \begin{equation}\label{e:phi-rd-incr-d1}
    s\phi(s^{-2})\le a_2 s\left(\frac{t}{s}\right)^{2\delta_2}\phi(t^{-2})\le a_2 s\left(\frac{t}{s}\right)\phi(t^{-2}) =a_2 t\phi(t^{-2})\, .
    \end{equation}
Furthermore, for $d\geq 1$ and all $0<s\leq t\leq 1$,
	\begin{equation}\label{e:phi-rd-triangle}
      \phi((s+t)^{-2})(s+t)^d\leq 2^d \phi(t^{-2})t^d\leq 2^d(\phi(s^{-2})s^d+\phi(t^{-2})t^d)\, .
	\end{equation}
   By using first \eqref{e:phi-rd-incr} (respectively \eqref{e:phi-rd-incr-d1} in case $d=1$), and then \eqref{e:phi-rd-triangle} we get
    \begin{align*}
     &\int_{B_r}\int_{B_r}\dfrac{\Phi(|x-y|)\Phi(|z-w|)}{\Phi(|x-w|)}\dfrac{|x-w|^{d}}{|x-y|^{d}|z-w|^{d}}|F(y,z)|j(|y-z|)\,dy\, dz\\
    &\leq c_1\int_{B_r}\int_{B_r}\dfrac{\Phi(|z-w|)}{|z-w|^{d}}|F(y,z)|j(|y-z|)\,dy\, dz\\ 
     &+ c_1\int_{B_r}\int_{B_r}\dfrac{\Phi(|x-y|)\Phi(|z-w|)}{\Phi(|y-z|)}\dfrac{|y-z|^{d}}{|x-y|^{d}|z-w|^{d}}|F(y,z)|j(|y-z|)\,dy\, dz\\
     &+ c_1\int_{B_r}\int_{B_r}\dfrac{\Phi(|x-y|)}{|x-y|^{d}}|F(y,z)|j(|y-z|)\,dy\, dz=:c_1(I_1+I_2+I_3),
    \end{align*}
    for some $c_1=c_1(d)$. By \eqref{e:estimate-of-j}, for $x,w\in B_r$
    \begin{align*}
     I_1&\leq c_2\int_{B(w,2r)}\dfrac{\Phi(|z-w|)}{|z-w|^{d}}\left(\int_{B(z,2r)}\dfrac{|F(y,z)|}{\Phi(|y-z|)|y-z|^d}\,dy\right)\, dz\\
     &\leq c_3 \int_{0}^{2r} \dfrac{\Phi(s)}{s}ds\cdot \int_{0}^{2r} \dfrac{\Phi(s)^\beta}{\Phi(s)s}ds\\
     &\overset{\eqref{e:H-infty}}{\leq}
     \frac{c_3}{a_1^\beta}(2r)^{-2\delta_1\beta}\Phi(2r)^\beta\int_0^{2r} s^{2\delta_1-1}\,ds\cdot\int_0^{2r} s^{2\delta_1(\beta-1)-1}\,ds\\     
     &\overset{\eqref{e:phi}}{\leq}\frac{c_34^{\beta-1}}{a_1^\beta\delta_1^2(\beta-1)} \Phi(r)^\beta.
    \end{align*}
    Note that the same upper bound holds for $I_3$. For the integral $I_2$ it follows by \eqref{e:estimate-of-j} that
    \begin{align*}
     I_2&\leq c_4\int_{B_r}\dfrac{\Phi(|x-y|)}{|x-y|^{d}}\left(\int_{B_r}\dfrac{\Phi(|z-w|)}{|z-w|^{d}}\Phi(|y-z|)^{\beta-2}\,dz\right)\,dy.
     \end{align*}
Without  loss of generality, assume $\beta\in(1,2)$. It follows that that     \begin{align*}
     I_2&\leq c_4\int_{B_r}\dfrac{\Phi(|x-y|)}{|x-y|^{d}}\left(\int\limits_{\substack{|z|<r\\|z-w|<|y-z|}}\dfrac{\Phi(|z-w|)}{|z-w|^{d}}\Phi(|y-z|)^{\beta-2}\,dz\right)\,dy\\
     &+c_4\int_{B_r}\dfrac{\Phi(|x-y|)}{|x-y|^{d}}\left(\int\limits_{\substack{|z|<r\\|z-w|\geq|y-z|}}\dfrac{\Phi(|z-w|)}{|z-w|^{d}}\Phi(|y-z|)^{\beta-2}\,dz\right)\,dy\\
     &\leq c_6\int_{B(x,2r)}\dfrac{\Phi(|x-y|)}{|x-y|^{d}}\left(\int\limits_{\substack{B(w,2r)}}\dfrac{\Phi(|z-w|)^{\beta-1}}{|z-w|^{d}}\,dz\right)\,dy\\
     &+c_6\int_{B(x,2r)}\dfrac{\Phi(|x-y|)}{|x-y|^{d}}\left(\int\limits_{\substack{B(y,2r)}}\dfrac{\Phi(|z-y|)^{\beta-1}}{|z-y|^{d}}\,dz\right)\,dy\\     
     &\leq c_7 \int_{0}^{2r} \dfrac{\Phi(s)}{s}ds\int_{0}^{2r} \dfrac{\Phi(s)^{\beta-1}}{s}ds\overset{\eqref{e:H-infty}}{\leq} \frac{c_74^{\beta-1}}{a_1^\beta\delta_1^2(\beta-1)}\Phi(r)^{\beta}.
     \end{align*}
Furthermore, by using the inequality $(s+t)^d\le 2^d(s^d+t^d)$, we have
    \begin{align*}
     &\int_{B_r}\int_{B_r}\Phi(|x-y|)^\frac 1 2\Phi(|z-w|)^\frac 1 2\dfrac{|x-w|^{d}}{|x-y|^{d}|z-w|^{d}}|F(y,z)|j(|y-z|)\,dy\, dz\\
    &\leq c_{8}\int_{B_r}\int_{B_r}\Phi(|x-y|)^\frac 1 2\dfrac{\Phi(|z-w|)^\frac 1 2}{|z-w|^{d}}|F(y,z)|j(|y-z|)\,dy\, dz\\ 
     &+c_{8}\int_{B_r}\int_{B_r}\Phi(|x-y|)^\frac 1 2\Phi(|z-w|)^\frac 1 2\dfrac{|y-z|^{d}}{|x-y|^{d}|z-w|^{d}}|F(y,z)|j(|y-z|)\,dy\, dz\\
     &+c_{8}\int_{B_r}\int_{B_r}\Phi(|z-w|)^\frac 1 2\dfrac{\Phi(|x-y|)^\frac 1 2}{|x-y|^{d}}|F(y,z)|j(|y-z|)\,dy\, dz=:c_{8}(I_4+I_5+I_6).
    \end{align*}
Similarly as in the case of integral $I_1$ it follows that
    \begin{align*}
     I_4&\leq c_{9}\Phi(2r)^\frac 1 2\int_{B(w,2r)}\dfrac{\Phi(|z-w|)^\frac 1 2}{|z-w|^{d}}\left(\int_{B(z,2r)}\dfrac{\Phi(|y-z|)^\beta}{\Phi(|y-z|)|y-z|^d}\,dy\right)\, dz\\
     &\leq c_{10} \Phi(r)^{\frac12}\int_{0}^{2r} \dfrac{\Phi(s)^\frac 1 2}{s}\,ds\cdot \int_{0}^{2r} \dfrac{\Phi(s)^{\beta-1}}{s}\,ds\overset{\eqref{e:H-infty}}{\leq} c_{11}\Phi(r)^\beta
    \end{align*}
    and analogously $I_6\leq c_{11}\Phi(r)^\beta$. Finally, since $\Phi$ is increasing and $\beta>1$,  
    \begin{align*}
     I_5&\leq c_{9}\int_{B_r}\dfrac{\Phi(|x-y|)^\frac 1 2}{|x-y|^{d}}\left(\int_{B_r}\dfrac{\Phi(|z-w|)^\frac 1 2}{|z-w|^{d}}\Phi(|y-z|)^{\beta-1}\,dz\right)\,dy\\
     &\leq c_{12}\Phi(r)^{\beta-1}\left(\int_0^{2r}\dfrac{\Phi(s)^\frac 1 2}{s}\, ds\right)^2\overset{\eqref{e:H-infty}}{\leq} c_{13} \Phi(r)^\beta.
    \end{align*}
Combining the above inequalities, and by using the estimates from Lemma \ref{l:green}, we get that for $r\leq1$,  
\begin{align}\label{e:key-lemma1}
\sup\limits_{x,w\in B_r} &\int_{B_r}\int_{B_r} \frac{G_{B_r}(x,y)G_{B_r}(z,w)}{G_{B_r}(x,w)}\, |F(y,z)| j(|y-z|)\, dz\, dy\leq c_{14}\Phi(r)^{\beta}\overset{\eqref{e:H-infty}}{\leq} \frac{c_{14}}{a_1}r^{2\delta_1\beta}\, ,
\end{align}
where the constant $c_{14}$ depends only on $d, a_1,a_2, \delta_1,\delta_2, C$ and $\beta$. By choosing $r_0$ such that $c_{14}a_1^{-1}r_0^{2\delta_1\beta}< \varepsilon$, we finish the proof.  
\end{proof}

Let $F:\R^d\times \R^d\to [0,\infty)$ be bounded and symmetric and set $A^{F}_{t}:=\sum_{0<s\le t}F(X_{s-}, X_s)$. For $x,w\in B(x_0,r)$, let $\P_x^{w}$ denote the law of the $h$-transformed killed process $X^{B(x_0,r)}$ with respect to the excessive function $G_{B(x_0,r)}(\cdot, w)$, i.e.~the process $X^{B(x_0,r)}$ conditioned to die at $\{w\}$. By \cite[Proposition 3.3]{CS03} it holds that
\begin{align}
    \E_x^{w} &\left[\sum_{s\le t}F(X_{s-}^{B(x_0,r)}, X_s^{B(x_0,r)})\right] \nonumber \\
    &=\E_x\left[\int_0^t \int_{B(x_0,r)} \frac{F(X_s^{B(x_0,r)},z)G_{B(x_0,r)}(z,w)}{G_{B(x_0,r)}(x,w)}\, j(|X_s^{B(x_0,r)}-z|)\, dz\, ds\right].  \label{e:levy-system-conditioned-0} 
\end{align}
Let $\zeta=\tau_{B(x_0,r)\setminus \{w\}}$ denote the lifetime of $X$ under the conditional probability $\P_x^w$. Then it follows from \eqref{e:levy-system-conditioned-0} that
\begin{align}
   \E_x^{w} & \left[\sum_{s <\zeta }F(X_{s-}, X_s) \right]  \nonumber  \\
    &= \E_x\left[\int_0^{\tau_{B(x_0,r)}} \int_{B(x_0,r)} \frac{F(X_s,z)G_{B(x_0,r)}(z,w)}{G_{B(x_0,r)}(x,w)}\, j(|X_s-z|)\, dz\, ds\right] \nonumber\\
    &=  \int_{B(x_0,r)} \int_{B(x_0,r)} \frac{G_{B(x_0,r)}(x,y)G_{B(x_0,r)}(z,w)}{G_{B(x_0,r)}(x,w)}\, F(y,z) j(|y-z|)\, dz\, dy.\label{e:levy-system-conditioned}
\end{align}

\begin{lemma}\label{l:estimate-for-killing}
 Let $\beta>1$, and $\tilde F(s):=\Phi(s)^\beta\wedge 1$. Assume that $F$ is non-negative and $F\in I(C,\tilde F)$, let $\varepsilon >0$ and denote by $r_0=r_0(d,a_1,a_2,\delta_1,\delta_2,\beta,C, \varepsilon)>0$ the constant from Lemma \ref{l:key-lemma}. Then for $r\le r_0$ and $\tau=\tau_{B(x_0,r)}$ it holds that
    $$
        e^{-\varepsilon}\le \E_x^{w} \left[e^{-A^{F}_{\tau}}\right] \le 1.
    $$
\end{lemma}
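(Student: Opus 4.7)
The plan is to handle the two inequalities separately. The upper bound $\E_x^w[e^{-A^F_\tau}]\le 1$ is immediate: since $F\ge 0$ we have $A^F_\tau\ge 0$, so $e^{-A^F_\tau}\le 1$ pointwise on $\Omega$, and the bound follows on taking expectation. The content of the lemma is really the lower bound.

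For the lower bound I would apply Jensen's inequality to the convex function $u\mapsto e^{-u}$ under the conditional measure $\P_x^w$, obtaining
$$
\E_x^w\!\left[e^{-A^F_\tau}\right] \;\ge\; \exp\!\bigl(-\E_x^w[A^F_\tau]\bigr).
$$
Thus it is enough to establish the estimate $\E_x^w[A^F_\tau]\le\varepsilon$ uniformly in $x,w\in B(x_0,r)$ whenever $r\le r_0$.

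To evaluate $\E_x^w[A^F_\tau]$ I would use the identification of the process under $\P_x^w$ with the $h$-transformed killed process $X^{B(x_0,r)}$ whose lifetime is $\zeta=\tau_{B(x_0,r)\setminus\{w\}}$, so that $A^F_\tau$ becomes $\sum_{s<\zeta}F(X_{s-},X_s)$. Formula \eqref{e:levy-system-conditioned} then yields exactly
$$
\E_x^w[A^F_\tau] \;=\; \int_{B(x_0,r)}\!\!\int_{B(x_0,r)} \frac{G_{B(x_0,r)}(x,y)\,G_{B(x_0,r)}(z,w)}{G_{B(x_0,r)}(x,w)}\,F(y,z)\,j(|y-z|)\,dz\,dy.
$$
By translation invariance of the Lévy process $X$, the Green function $G_{B(x_0,r)}$ is a translate of $G_{B_r}$, so the double integral coincides with the quantity estimated in Lemma \ref{l:key-lemma}. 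Applying that lemma with the given $\varepsilon$ and $r\le r_0$ gives $\E_x^w[A^F_\tau]<\varepsilon$, and combining with the Jensen bound above produces $\E_x^w[e^{-A^F_\tau}]\ge e^{-\varepsilon}$.

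The real technical work — bounding the three–Green–function integral against $\varepsilon$ — has already been done in Lemma \ref{l:key-lemma}, so there is no serious obstacle here; the only care needed is bookkeeping, namely making sure that the conditional expectation of the sum is read via \eqref{e:levy-system-conditioned} with $\tau$ replaced by the lifetime $\zeta$ of the conditioned process (they agree for the functional in question because $F$ vanishes on the diagonal and jumps to $w$ contribute nothing), and that the shift from $B(x_0,r)$ to $B_r$ is legal by translation invariance.
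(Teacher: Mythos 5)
Your proof is correct and follows the same route as the paper: Jensen's inequality reduces the lower bound to showing $\E_x^w[A^F_\tau]<\varepsilon$, which is read off from \eqref{e:levy-system-conditioned} together with Lemma~\ref{l:key-lemma}. The paper states this in two sentences; your extra remarks about the trivial upper bound, the identification of $\tau$ with the lifetime $\zeta$ of the conditioned process, and translation invariance from $B(x_0,r)$ to $B_r$ are all sound bookkeeping that the paper leaves implicit.
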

\begin{proof}
    Let $r\le r_0$. By \eqref{e:levy-system-conditioned} and Lemma \ref{l:key-lemma} we see that $\E_x^{w}[A^{F}_{\tau}]<\varepsilon$. By Jensen's inequality, it follows that
    \begin{gather*}
         \E_x^{w}[e^{-A^{F}_{\tau}}]\geq e^{-\E_x^{w}[A^{F}_{\tau}]}\geq e^{-\varepsilon}.
    \end{gather*}
\end{proof}

\begin{defn}
 Let $F$ be a non-negative, symmetric function on $\R^d \times\R^d$ and set $A^F_t:=\sum_{0<s\le t}F(X_{s-},X_s)$. We say that a non-negative function $u:\R^d\to [0,\infty)$ is \emph{$F$-harmonic} in a bounded open set $D$ with respect to $X$ if for every open set $V\subset \overline{V}\subset D$ the following mean-value property holds:
$$
    u(x)=\E_x\left[e^{-A^F_{\tau_{V}}} u(X_{\tau_{V}})\right],\quad \text{for all }x\in V.
$$
The function $u$ is \emph{regular $F$-harmonic in $D$}  if $u(x)=\E_x\left[e^{-A^F_{\tau_{D}}} u(X_{\tau_{D}})\right]$ for all $x\in D$.
\end{defn}
We note that the standard argument using the strong Markov property shows that any regular $F$-harmonic function $u$ is also $F$-harmonic in $D$.

 Next we prove the Harnack inequality for non-negative $F$-harmonic functions.

\begin{thm}\label{t:hi}
    Let $D\subset \R^d$ be a bounded open set and $K\subset D$ a compact subset of $D$. Fix $\beta>1$ and $C>0$, and let $\tilde F(s)=\Phi(s)^{\beta}\wedge 1$. There exists a constant $C_5=C_5(d, a_1,a_2,\delta_1, \delta_2,\beta,C, D, K)>1$ such that for every $F\in I(C,\tilde F)$ and every $u:\R^d\to [0,\infty)$ which is $F$-harmonic with respect to $X$ in $D$, it holds that
    $$
        C_5^{-1}u(x) \le u(y) \le C_5 u(x),\quad x,y \in K.
    $$
\end{thm}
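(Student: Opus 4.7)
The plan is to follow the standard Harnack inequality template for isotropic L\'evy processes, with Lemma~\ref{l:estimate-for-killing} providing uniform control of the $F$-discount factor $e^{-A^F}$ across all $F\in I(C,\tilde F)$. A standard Harnack chain argument (cover $K$ by finitely many balls of small radius $r_1$ and chain the inequality through overlapping pairs) reduces the theorem to the following local statement: there exist $r_1>0$ and $C_5'>1$, depending only on $d,a_1,a_2,\delta_1,\delta_2,\beta,C$, such that every non-negative regular $F$-harmonic function $u$ in $B(x_0,2r_1)$ satisfies $u(x)\le C_5'\, u(x_0)$ for every $x\in B(x_0,r_1)$. The dependence of $C_5$ on $D$ and $K$ enters only through $\dist(K,\partial D)$ and the number of balls needed to cover $K$.

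\textbf{Local Harnack via the $F$-Poisson kernel.} Fix $\varepsilon=1$ and set $r_1\le r_0(\varepsilon)/2$ with $r_0$ from Lemma~\ref{l:estimate-for-killing}; write $B'=B(x_0,2r_1)$ and $\tau'=\tau_{B'}$. Applying the L\'evy system formula to $X$ with the predictable integrand $e^{-A^F_{s-}}$, and using that $X$ exits $B'$ by a jump, the mean-value property of regular $F$-harmonic $u$ becomes
\[
u(x)=\int_{(\overline{B'})^c} u(z)\,K^F(x,z)\,dz,\qquad K^F(x,z)=\E_x\!\left[\int_0^{\tau'} e^{-A^F_s-F(X_s,z)}\, j(|X_s-z|)\,ds\right].
\]
Since $0\le F\le C$, the factor $e^{-F(X_s,z)}$ lies in $[e^{-C},1]$ uniformly, so up to the constant $e^{-C}$ it suffices to compare, for $x$ and $x_0$,
\[
\widetilde K(x,z):=\int_{B'} G^F_{B'}(x,y)\, j(|y-z|)\,dy,
\]
where $G^F_{B'}$ is the Feynman--Kac Green function attached to the multiplicative functional $e^{-A^F}$.

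\textbf{Green and Poisson kernel comparison.} The bound $G^F_{B'}(x,y)\le G_{B'}(x,y)$ is immediate from $e^{-A^F_s}\le 1$. For the matching lower bound, the conditional gauge identity
\[
G^F_{B'}(x,y)=G_{B'}(x,y)\,\E_x^{y}\!\left[e^{-A^F_\zeta}\right],
\]
obtained from Doob's $h$-transform with $h(\cdot)=G_{B'}(\cdot,y)$ together with a last-exit/Fubini argument, combined with Lemma~\ref{l:estimate-for-killing}, gives $G^F_{B'}(x,y)\ge e^{-\varepsilon}\,G_{B'}(x,y)$. It then remains to establish the uniform comparison
\[
\int_{B'} G_{B'}(x,y)\,j(|y-z|)\,dy\;\asymp\;\int_{B'} G_{B'}(x_0,y)\,j(|y-z|)\,dy
\]
for $x\in B(x_0,r_1)$ and $z\in(\overline{B'})^c$, which is the classical interior Poisson-kernel comparison for isotropic L\'evy processes under weak scaling; it follows from \eqref{e:estimate-of-G}, \eqref{e:estimate-of-j} and \eqref{e:phi} by splitting the $y$-integration according to the relative size of $|y-x|$ and $r_1$. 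Chaining the three comparisons yields $u(x)\asymp u(x_0)$ and hence the local Harnack inequality.

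\textbf{Main obstacle.} The delicate step is the conditional gauge identity together with its uniform lower bound across the whole class $I(C,\tilde F)$: the identity requires a careful last-exit decomposition of $X^{B'}$ conditioned to die at $y$, and the key point is that the constant $e^{-\varepsilon}$ does not depend on the particular $F\in I(C,\tilde F)$. This uniformity is precisely what is delivered by the technical Lemma~\ref{l:key-lemma}, through Jensen's inequality as packaged in Lemma~\ref{l:estimate-for-killing}.
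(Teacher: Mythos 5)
Your local Harnack estimate is essentially correct and close in spirit to the paper's: both route the uniform control of the Feynman--Kac discount through Lemma~\ref{l:estimate-for-killing}. You do so via the L\'evy system and the conditional gauge identity $G^F_{B'}(x,y)=G_{B'}(x,y)\,\E_x^y\bigl[e^{-A^F_\zeta}\bigr]$ applied to the $F$-Poisson kernel, whereas the paper invokes the decomposition from \cite[Theorem 2.4]{CS03b}, $u(y)=\E_y\bigl[u(X_\tau)\,\E_y^{X_{\tau-}}[e^{-A^F_\zeta}]\bigr]$, directly. Either way, Lemma~\ref{l:estimate-for-killing} (built on Lemma~\ref{l:key-lemma}) yields $u(x)\asymp u(x_0)$ on a small ball with constants depending only on $d, a_1, a_2, \delta_1, \delta_2, \beta, C$, and the standard Poisson-kernel comparison closes the local step.

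The globalization step is, however, a genuine gap. Theorem~\ref{t:hi} makes no connectedness assumption on $D$, and for a disconnected $D$ (or more generally whenever $K$ is not covered by a connected chain of balls contained in $D$) there is no Harnack chain of overlapping balls joining two far-apart points of $K$; chaining through overlapping pairs simply does not apply. For local operators the Harnack inequality in fact fails on disconnected domains; for nonlocal operators it holds, but the proof must exploit the jumps. This is what the paper does: having proved the local estimate with radius $\rho_0=r_0\wedge\frac12\dist(K,D^c)$, it fixes arbitrary $x,z\in K$ with $|x-z|\ge\rho_0/2$, sets $B_1=B(x,\rho_0/4)$, $B_2=B(z,\rho_0/4)$, and bounds
$$
u(z)\ \ge\ \tfrac12\int_{B_1}u(y)\,P_{B_2}(z,y)\,dy\ \gtrsim\ u(x)\int_{B_1}P_{B_2}(z,y)\,dy,
$$
then uses the Poisson-kernel lower bound \eqref{e:estimate-of-P} together with \eqref{e:estimate-of-j} and the monotonicity of $j$ to bound the last integral from below by a power of $\rho_0/\diam(K)$. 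This one-jump comparison covers arbitrary $D$ and $K$ (and incidentally gives a polynomial, rather than exponential, dependence of $C_5$ on $\diam(K)/\rho_0$, as noted in the remark after the theorem). You should replace the chaining reduction by this jump argument; with that change the proof is complete.
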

\begin{proof}
    Set $\rho_0=r_0\wedge \tfrac{\text{dist}(K,D^c)}2$ where $r_0=r_0(d,a_1,a_2,\delta_1,\delta_2,\beta,C,\ln(2))$ is the constant from Lemma \ref{l:key-lemma}.
 Let $x\in K$, $r\in (0, \rho_0]$ 
 and $\tau=\tau_{B(x,r)}$. By \cite[Theorem 2.4]{CS03b}, for every $y\in B(x,r)$,
    $$
        u(y)
        =\E_y\left[u(X_{\tau})e^{-A^{F}_{\tau}}\right]
        =\E_y\left[u(X_{\tau})\E_y^{X_{\tau-}} \left[e^{-A^{F}_{\zeta}}\right]\right],
    $$
    where $\zeta=\tau_{B(x, r)\setminus \{v\}}$ 
    and $v=X_{\tau-}$. By Lemma \ref{l:estimate-for-killing}, $\frac12\le \E_x^{X_{\tau-}} \left[e^{-A^{F}_{\zeta}}\right] \le 1$, implying that for every $y\in B(x,r)$
    \begin{equation}\label{e:hi-1}
        \frac12 \E_y[u(X_{\tau})]\le u(y)\le \E_y[u(X_{\tau})].
    \end{equation}
    If $y\in B(x, \tfrac r2)$, then by \cite[Proposition 2.3]{KSV15} for $a=\tfrac 1 2$,
    \begin{align*}
        \E_x [u(X_{\tau})] &= \int_{B(x,r)^c}P_{B(x,r)}(x,z) u(z)\, dz \\
        &\le c_1\, \int_{B(x,r)^c}P_{B(x,r)}(y,z) u(z)\, dz\\
        &=c_1 \E_y [u(X_{\tau})],
    \end{align*}
    where the constant $c_1$, although not explicitly mentioned in the statement of the theorem, depends only on $d, a_1, a_2, \delta_1$ and $\delta_2$. Analogously,
    $$
        \E_x [u(X_{\tau})]\ge c_1^{-1} \E_y [u(X_{\tau})].
    $$
    Combining the last two estimates with \eqref{e:hi-1} yields
    \begin{equation}\label{e:hi-2}
        \tfrac 1 2 c_1^{-1} u(y) \le u(x)\le 2c_1 u(y),\quad y\in B(x,r/2).
    \end{equation}
    In particular, \eqref{e:hi-2} holds for $r=\rho_0$.

    Now pick $z\in K$ such that $z\in B(x,\frac {\rho_0}2)^c$. Let $B_1=B(x,\frac{\rho_0}4)$ and $B_2=B(z,\frac{\rho_0}4)$ and note that $B_1\cap B_2=\emptyset$. It follows that 
    \begin{align*}
        u(z)
        &\overset{\eqref{e:hi-1}}{\ge} \frac12 \E_z[u(X_{\tau_{B_2}})]\\
        &=   \frac12 \int_{B_2^c} u(y)P_{B_2}(z,y)\, dy\\
        &\ge \frac12 \int_{B_1} u(y)P_{B_2}(z,y)\, dy \\
        &\overset{\eqref{e:hi-2}}{\ge}  \frac1{4 c_1}u(x)\int_{B_1} P_{B_2}(z,y)\, dy 
    \end{align*}
   By \eqref{e:estimate-of-P} it follows that 
   \[
    P_{B_2}(z,y)\geq c_2 j(|z-y|)\Phi(\tfrac{\rho_0}{4}),\quad y\in \overline{B(z,\tfrac{\rho_0}{4})}^c, 
   \]
   for some $c_2=c_2(d,a_1,a_2,\delta_1,\delta_2)$. Furthermore, since $j$ is decreasing it follows that
    \begin{align*}
        u(z)
        &\ge \frac{c_2}{4c_1} j\left(\text{diam}(K)\right)\Phi\left(\frac{\rho_0}{4}\right)|B(0,1)|(\rho_0/4)^d u(x)\\
        &\overset{\eqref{e:estimate-of-j}}{\ge} c_3\frac{\Phi\left(\frac{\rho_0}{4}\right)}{\Phi\left(\text{diam}(K)\right)}\left(\frac{\rho_0}{\text{diam}(K)}\right)^{d} u(x)\\
         &\overset{\eqref{e:H-infty}}{\ge} c_4 \left(\frac{\rho_0}{\text{diam}(K)}\right)^{2\delta_1+d} u(x).
    \end{align*}
    Analogously, $u(x)\ge c_4 \left(\frac{\rho_0}{\text{diam}(K)}\right)^{2\delta_1+d} u(z)$. Together with \eqref{e:hi-2} this proves the theorem.
\end{proof}

\begin{remark}{\rm 
It is clear from the proof that the dependence of $C_5$ on $K$ and $D$ is only through the ratio $\rho_0/\mathrm{diam}(K)=(r_0\wedge\frac12 \mathrm{dist}(K,D^c))/\mathrm{diam}(K)$.
}
\end{remark}

%%%%%%%%%%%%%%%%%%%%%%%%%%%%%%%%%%%%%%%%%%%%%%%%%%%%%%%%%%%%%%%%%%%%%%%%%%%%%%%%%%%
%%%%%%%%%%%%%%%%%%                          Proof of Theorem \ref{t:finite-expectation}                          %%%%%%%%%%%%%%%%%%%%%%%%%%%%
%%%%%%%%%%%%%%%%%%%%%%%%%%%%%%%%%%%%%%%%%%%%%%%%%%%%%%%%%%%%%%%%%%%%%%%%%%%%%%%%%%%
\section{Proof of Theorem \ref{t:finite-expectation}}\label{s:proof-finite}
In this section we assume that $\phi$ is a complete Bernstein functions satisfying the global weak scaling condition \eqref{e:H}. 
Note that  \eqref{e:H} implies that 
\begin{equation}\label{e:cf-criterion-2}
 \int_0^1\frac{s^{\frac d 2-1}}{\phi(s)}\,ds\leq a_2\int_0^1 s^{\frac d 2-\delta_2-1}\,ds<\infty\, ,
\end{equation}
so by the Chung-Fuchs type criterion it follows that $X$ is transient.
As in the previous section, we assume that $\phi(1)=1$.

For each $R>0$ define
\[
  \phi^R(s)=\frac{\phi(R^{-2}s)}{\phi(R^{-2})},\quad s>0,
\]
and note that the function $\phi^R$ is also a complete Bernstein function satisfying the scaling condition \eqref{e:H} with same constants. Also, for $s>0$ let $\Phi^R(s):=(\phi^R(s^{-2}))^{-1}$
and $\Phi:=\Phi^1$.  Denote by $X^R$ the subordinate Brownian motion with the characteristic exponent $\psi^R(\xi)=\phi^R(|\xi|^2)$, $\xi\in\R^d$, and note that 
\begin{equation}\label{eq:eqD}
  (X^R_t)_{t\geq 0}\overset{\text{D}}{=} (R^{-1}X_{t/\phi(R^{-2})})_{t\geq 0}.
\end{equation}
Since the Laplace exponent of $X^R$ satisfies \eqref{e:H} with the same constants, the definitions and the results of Section \ref{s:HI} apply to all $X^R$ simultaneously. The notions related to the process $X^R$ will have the superscript $R$. For example, if $F:\R^d\times \R^d\to [0,\infty)$, we let
$$
A^{R, F}_t=\sum_{0<s\le t}F(X_{s-}^R, X_s^R)\, ,
$$
so that $A^{1,F}=A^F$. Similarly, for a Borel set $D\subset \R^d$, we let $\tau_D^R=\inf\{t>0: X_t^R\notin D\}$ be the first exit time from $D$, so that $\tau_D^1=\tau_D$.

Further, for $u:\R^d\to [0,\infty)$, $F:\R^d\times \R^d\to [0,\infty)$, $D\subset \R^d$, and any $R>0$, set
$$
u_R(x):=u(Rx)\, , \quad F_R(x,y):=F(Rx,Ry)\, , \quad D_R:=\{Rx:\, x\in D\}\, .
$$
The following lemma is a crucial result relating regular $F$-harmonic function with respect to $X$ with regular $F_R$-harmonic functions with respect to $X^R$.

 \begin{lemma}\label{l:harmonic-scaling-b}
     Let $D$ be a bounded open set in $\R^d$, $R>0$, $\zeta:=\tau_{D_R}$ and $\eta:=\tau^R_D$. Assume that $u$ is regular $F$-harmonic in $D_R$ for $X$, i.e.
     \begin{equation}\label{e:harmonic-scaling-1}
         u(x)=\E_x\left[e^{-A^F_{\zeta}}u(X_{\zeta})\right]\quad \text{for all\ \ } x\in D_R.
     \end{equation}
     Then $u_R$ is regular $F_R$-harmonic in $D$ for $X^{R}$, i.e.
     \begin{equation*}\label{e:harmonic-scaling-2}
         u_R(x)=\E_x\left[e^{-A^{R,F_R}_{\eta}}u_R(X^{R}_{\eta})\right]\quad \text{for all\ \ } x\in D.
     \end{equation*}
 \end{lemma}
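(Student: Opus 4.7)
\medskip

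\noindent\textbf{Proof plan for Lemma \ref{l:harmonic-scaling-b}.} The strategy is to use the distributional identity \eqref{eq:eqD} to transport the regular $F$-harmonicity of $u$ on $D_R$ (for $X$) to a statement about $u_R$ on $D$ (for $X^R$) by a joint time-change and spatial rescaling, tracking exit times and the jump sum separately. Set $c:=\phi(R^{-2})$. By \eqref{eq:eqD}, under $\P^R_x$ the process $(X^R_t)_{t\ge 0}$ has the same law as $(R^{-1}X_{t/c})_{t\ge 0}$ started from $R^{-1}(Rx)=x$, i.e., as the latter process under $\P_{Rx}$. I will work on this coupled probability space, so that the identity $X^R_t=R^{-1}X_{t/c}$ holds pathwise.

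First I would identify the exit times. Since
\[
X^R_t\notin D \iff R^{-1}X_{t/c}\notin D \iff X_{t/c}\notin D_R,
\]
the change of variables $s=t/c$ gives $\eta=\tau^R_D=c\,\tau_{D_R}=c\zeta$. Next I would match the additive functionals. A jump of $X^R$ at time $s$ corresponds to a jump of $X$ at time $s/c$, and $RX^R_{s-}=X_{(s/c)-}$, $RX^R_{s}=X_{s/c}$. Using $F_R(y,z)=F(Ry,Rz)$ and substituting $u=s/c$,
\[
A^{R,F_R}_{\eta}=\sum_{0<s\le\eta}F\bigl(RX^R_{s-},RX^R_s\bigr)
=\sum_{0<u\le\eta/c}F\bigl(X_{u-},X_u\bigr)=A^{F}_{\zeta}.
\]
Finally, $u_R(X^R_\eta)=u(RX^R_\eta)=u(X_{\eta/c})=u(X_\zeta)$.

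Putting these three identifications together and invoking the assumed regular $F$-harmonicity \eqref{e:harmonic-scaling-1} at the point $Rx\in D_R$,
\[
\E_x\!\left[e^{-A^{R,F_R}_{\eta}}u_R(X^R_\eta)\right]
=\E_{Rx}\!\left[e^{-A^{F}_{\zeta}}u(X_\zeta)\right]
=u(Rx)=u_R(x),
\]
which is the desired identity for every $x\in D$.

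The only real subtlety is the bookkeeping of the simultaneous time-change $t\mapsto t/c$ and space-scaling $y\mapsto R^{-1}y$: one must check that (i) the exit time rescales as $\eta=c\zeta$, (ii) the jump set is preserved up to the time reparametrisation, so the countable sum over jumps is an invariant, and (iii) the definitions $F_R=F(R\cdot,R\cdot)$ and $u_R=u(R\cdot)$ exactly absorb the spatial factor $R^{-1}$ appearing in \eqref{eq:eqD}. None of these steps requires the scaling condition \eqref{e:H} beyond the distributional relation \eqref{eq:eqD}; the argument is purely a scaling/coupling identity.
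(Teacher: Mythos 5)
Your proof is correct and follows essentially the same approach as the paper's: both use the scaling identity \eqref{eq:eqD} to relate the law of $X^R$ under $\P_x$ to the law of $X$ (time-changed by $t\mapsto t\Phi(R)$) under $\P_{Rx}$, identify the exit times $\eta=\zeta/\Phi(R)$, and apply the change of variables in the jump sum. The only cosmetic difference is that you phrase the argument via an explicit pathwise coupling $X^R_t=R^{-1}X_{t/\phi(R^{-2})}$ and verify the three ingredients (exit time, additive functional, boundary value) separately, whereas the paper does the substitution inside a single chain of equalities in expectation.
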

 \begin{proof}
     Note that the $\P_x$-distribution of $(R X^R_t)_{t\ge 0}$ is equal to the $\P_{Rx}$-distribution of $(X_{t\Phi(R)})_{t\ge 0}$. From this identity it follows that the $\P_x$-distribution of the pair $(\eta, R X^R_{\eta})$ is equal to the $\P_{Rx}$-distribution of  $(\zeta/\Phi(R),X_{\zeta})$. Using these scaling identities in the second line and a change of variables in the third line we get
     \begin{align*}
         \E_x\left[e^{-A^{R,F_R}_{\eta}}u_R(X^R_{\eta})\right]
         &=  \E_x\left[e^{-\sum_{s\leq \eta}F(R X^R_{s-}, R X^R_s)}u(RX^R_{\eta})\right]\\
         &=  \E_{Rx}\left[e^{-\sum_{s\leq \zeta/\Phi(R)} F(X_{ \Phi(R)s-},X_{ \Phi(R)s})} u(X_{\zeta})\right]\\
         &=  \E_{Rx} \left[e^{-\sum_{s\leq \zeta}F(X_{s-},X_s)} u(X_{\zeta})\right]\\
         &\overset{\eqref{e:harmonic-scaling-1}}{=}  u(Rx)
         =  u_R(x).
     \end{align*}
     \end{proof}
     
The analog of the following lemma is proved in \cite{SV}, so we omit the proof.
 \begin{lemma}\label{l:harmonic-semilocal}\cite[Lemma 4.10]{SV}
     Let $D\subset \R^d$ be a bounded open set and assume that  $F^{(1)}, F^{(2)}$
     are two non-negative symmetric functions on $\R^d\times\R^d$ vanishing on the diagonal such that
     $$
          F^{(1)}(x,y)=F^{(2)}(x,y)
         \quad\text{for all}\quad
         (x,y)\in (D\times\R^d)\cup (\R^d\times D).
     $$
     Then $u$ is (regular) $F^{ (1)}$ -harmonic in $D$ if, and only, if $u$ is (regular) $F^{ (2)}$-harmonic in $D$.
 \end{lemma}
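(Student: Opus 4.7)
The plan is to prove the lemma by showing that, under the hypothesis, the additive functionals $A^{F^{(1)}}_\tau$ and $A^{F^{(2)}}_\tau$ coincide $\P_x$-a.s.\ for the relevant exit time $\tau$, which makes the two mean-value identities defining $F^{(i)}$-harmonicity literally the same equation. This reduces the whole statement to a path-wise matching of jumps.

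First I would handle the non-regular case. Fix an open set $V$ with $\overline{V}\sub D$. For any $s\le \tau_V$, cadlag paths together with $X_r\in V$ for all $r<\tau_V$ force the left limit $X_{s-}$ into $\overline{V}\sub D$. Hence $(X_{s-},X_s)\in D\times\R^d$ at every jump time $s\le \tau_V$, and by hypothesis $F^{(1)}(X_{s-},X_s)=F^{(2)}(X_{s-},X_s)$. Summing over the (at most countably many) jump times yields $A^{F^{(1)}}_{\tau_V}=A^{F^{(2)}}_{\tau_V}$ $\P_x$-a.s., so $\E_x[e^{-A^{F^{(1)}}_{\tau_V}}u(X_{\tau_V})]=\E_x[e^{-A^{F^{(2)}}_{\tau_V}}u(X_{\tau_V})]$ for all $x\in V$, establishing the equivalence of $F^{(1)}$- and $F^{(2)}$-harmonicity on $D$.

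The regular case proceeds by the same argument applied at $\tau_D$, once the terminal jump is controlled. Under \eqref{e:H} the subordinator has no drift ($b=0$), so $X$ is a pure-jump process and $\tau_D$ is a jump time a.s. The L\'evy system (Ikeda--Watanabe) description gives the law of $(X_{\tau_D-},X_{\tau_D})$ a density proportional to $G_D(x,y)j(|y-z|)$ supported on $D\times D^c$, so $\P_x(X_{\tau_D-}\in D)=1$. Therefore, at every jump time $s\le \tau_D$ either $X_s\in D$ (when $s<\tau_D$) or $X_{s-}\in D$ (when $s=\tau_D$); in either case the hypothesis forces $F^{(1)}(X_{s-},X_s)=F^{(2)}(X_{s-},X_s)$, and summing gives $A^{F^{(1)}}_{\tau_D}=A^{F^{(2)}}_{\tau_D}$ $\P_x$-a.s. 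The regular $F^{(1)}$- and $F^{(2)}$-harmonicity identities thus coincide.

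The main obstacle is the handling of the terminal jump in the regular case: a priori $X_{\tau_D-}$ is only known to lie in $\overline{D}$, and one must rule out $\{X_{\tau_D-}\in \partial D\setminus D\}$. The pure-jump structure forced by $b=0$, combined with the absolute continuity of the L\'evy measure $\nu(dx)=j(|x|)dx$, resolves this via the L\'evy system formula, which pushes the left limit at the exit time into the interior $D$. An alternative avoiding Ikeda--Watanabe is to take an exhaustion $V_n\uparrow D$ with $\overline{V_n}\sub D$, apply the non-regular case on each $V_n$ through the strong Markov property, and pass to the limit $\tau_{V_n}\uparrow \tau_D$; that route trades the boundary issue for a dominated convergence argument in which the terminal jump contribution must be accounted for separately.
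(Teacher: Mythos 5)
Your proof is correct and uses the natural path-matching argument that the paper defers to \cite[Lemma~4.10]{SV}: at every jump time $s\le\tau$ the pair $(X_{s-},X_s)$ lands in $(D\times\R^d)\cup(\R^d\times D)$, so the sums defining $A^{F^{(1)}}_\tau$ and $A^{F^{(2)}}_\tau$ agree term by term, and the two mean-value identities become the same equation.

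One small overcomplication worth flagging in the regular case: you do not need the claim that $\tau_D$ is a.s.\ a jump time (i.e.\ that continuous exit has probability zero). On the event $\{X_{\tau_D-}=X_{\tau_D}\}$ the terminal summand is $F^{(i)}(X_{\tau_D-},X_{\tau_D})=0$ for both $i$, since $F^{(1)},F^{(2)}$ vanish on the diagonal, so that event is harmless without any appeal to the pure-jump structure. The only thing that genuinely must be ruled out is the event $\{X_{\tau_D-}\in\partial D\setminus D,\ X_{\tau_D-}\neq X_{\tau_D}\}$, and this follows directly from the L\'evy system formula applied to $f(y,z)=\mathbf{1}_{\{y\in\partial D\}}\mathbf{1}_{\{y\neq z\}}$: the compensator vanishes because $X_s\in D$ for $s<\tau_D$ and $\{\tau_D\}$ is a Lebesgue-null time set, forcing $\P_x\big(X_{\tau_D-}\in\partial D,\ X_{\tau_D-}\neq X_{\tau_D}\big)=0$. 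This is the content you were extracting from Ikeda--Watanabe; stating it this way avoids asserting more than is needed and removes the dependence on the boundary of $D$ being regular enough for the full exit-distribution identity. Your alternative route via an exhaustion $V_n\uparrow D$ is also fine and, as you note, trades the boundary discussion for a dominated-convergence passage; either variant is acceptable.
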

 
  \begin{lemma}\label{l:fuchsian}
 Let $\beta>1$ and $R\geq 1$. Assume that $F:\R^d\times\R^d \to [0,\infty)$ is symmetric, bounded and satisfies \eqref{e:fuchsian}.
 \begin{itemize}
 \item[(a)]\label{l:fuchsian-a}
     Then $F_R$ is symmetric, bounded and satisfies $F_R(x,y)\le C\Phi^R(|x-y|)^{\beta}$ for all $(x,y)\in (B(0,1)^c\times \R^d)\cup (\R^d\times B(0,1)^c)$.
 \item[(b)]\label{l:fuchsian-b}
     For a bounded open set $D\subset B(0,1)^c$ let
     $$
         \widehat{F}_R(x,y)
         =
         \begin{cases}
         F_R(x,y)  & \text{if\ \ } (x,y)\in (D\times \R^d)\cup (\R^d\times D)\\
         0         & \text{otherwise.}
         \end{cases}
     $$
         Then $\widehat{F}_R$ is symmetric, bounded and satisfies $\widehat{F}_R(x,y)\le C\Phi^R(|x-y|)^{\beta}$ for all $x,y\in\R^d$.
\end{itemize}
 \end{lemma}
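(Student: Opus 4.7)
The proof is essentially a bookkeeping exercise: convert the Fuchsian bound on $F$ into the simpler bound on $F_R$ by tracking how $\Phi$ scales when the Laplace exponent is replaced by $\phi^R$, and then observe that restricting to $D\subset B(0,1)^c$ trivially extends the bound to all of $\R^d$ in part (b).

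The first step is to compute $\Phi^R$ explicitly in terms of $\Phi$. From $\phi^R(u)=\phi(R^{-2}u)/\phi(R^{-2})$ one gets
$$
\Phi^R(s)=\frac{1}{\phi^R(s^{-2})}=\frac{\phi(R^{-2})}{\phi((Rs)^{-2})}=\phi(R^{-2})\,\Phi(Rs).
$$
Plugging this in, the claim of part (a) becomes
$$
F(Rx,Ry)\le C\,\phi(R^{-2})^{\beta}\,\Phi(R|x-y|)^{\beta}
\quad\text{for all } (x,y)\in(B(0,1)^c\times\R^d)\cup(\R^d\times B(0,1)^c).
$$
The second step is to apply \eqref{e:fuchsian} at the points $(Rx,Ry)$, which gives
$$
F(Rx,Ry)\le C\,\frac{\Phi(R|x-y|)^{\beta}}{1+\Phi(R|x|)^{\beta}+\Phi(R|y|)^{\beta}},
$$
so it suffices to verify $1+\Phi(R|x|)^{\beta}+\Phi(R|y|)^{\beta}\ge \phi(R^{-2})^{-\beta}$. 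By symmetry assume $|x|\ge 1$. Since $R\ge 1$ we have $(R|x|)^{-2}\le R^{-2}$, and because $\phi$ is nondecreasing this yields $\phi((R|x|)^{-2})\le\phi(R^{-2})$, hence
$$
\Phi(R|x|)=\phi((R|x|)^{-2})^{-1}\ge \phi(R^{-2})^{-1},
$$
so $\Phi(R|x|)^{\beta}\ge \phi(R^{-2})^{-\beta}$, and the desired lower bound on $1+\Phi(R|x|)^{\beta}+\Phi(R|y|)^{\beta}$ follows. This proves (a); boundedness and symmetry of $F_R$ are immediate from the corresponding properties of $F$.

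For part (b), note that $\widehat{F}_R$ is symmetric and bounded by definition, and equals $0$ outside $(D\times\R^d)\cup(\R^d\times D)$, where the asserted inequality is trivial since the right-hand side is non-negative. On the set $(D\times\R^d)\cup(\R^d\times D)$, since $D\subset B(0,1)^c$, we are in the regime covered by part (a), and $\widehat{F}_R(x,y)=F_R(x,y)\le C\,\Phi^R(|x-y|)^{\beta}$. There is no genuine obstacle in this lemma; the only subtlety is keeping track of the scaling identity $\Phi^R(s)=\phi(R^{-2})\Phi(Rs)$ correctly, which is what makes the denominator in \eqref{e:fuchsian} exactly disappear once one coordinate lies outside $B(0,1)$.
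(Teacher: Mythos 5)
Your proof is correct and follows the same route as the paper: apply \eqref{e:fuchsian} at $(Rx,Ry)$, use the scaling identity $\Phi^R(s)=\phi(R^{-2})\,\Phi(Rs)$, and observe that $|x|\ge 1$ (or $|y|\ge 1$) forces the denominator to be at least $\phi(R^{-2})^{-\beta}$. One small slip: the bound $(R|x|)^{-2}\le R^{-2}$ follows from $|x|\ge 1$, not from $R\ge 1$ as you write, though the conclusion is unaffected.
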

 
 \begin{proof}
(a) Clearly, $F_R$ is symmetric and bounded. Further, for $|x|\ge 1$ or $|y|\ge 1$, we have 
 \begin{align*}
     F_R(x,y)
     &= F(Rx, R y)
     \le C\frac{\Phi(|Rx-Ry|)^{\beta}}{1+\Phi(|Rx|)^{\beta}+\Phi(|Ry|)^{\beta}}\\
     &=C \frac{\Phi^R(|x-y|)^{\beta}}{\Phi(R)^{-\beta}+\Phi^R(|x|)^{\beta}+\Phi^R(|y|)^{\beta}}
     \le C\Phi^R(|x-y|)^{\beta}.
 \end{align*}
 (b) This immediately follows from (a).
 \end{proof} 
 
 For a Borel set $C\subset \R^d$ let $T_C=\inf\{t>0:\, X_t\in C\}$ be its hitting time. If $0<a<b$, let $V(0,a,b):=\{x\in \R^d:\, a<|x|<b\}$ be the open annulus, and denote by $\overline{V}(0,a,b)$ its closure.
 \begin{lemma}\label{l:infinite-hitting}
     There exists a positive integer $M=M(d,\delta_1, a_1)\ge 2$ such that for every  strictly increasing sequence of positive numbers $(R_n)_{n\ge 1}$ satisfying $\lim_{n\to \infty}R_n=\infty$ it holds that 
     $$
         \P_x\left(\limsup_{n\to \infty}\, \{T_{\overline{V}(0,R_n,M R_n)}<\infty\}\right) =1
         \quad\text{for all\ \ } x\in\R^d.
     $$
 \end{lemma}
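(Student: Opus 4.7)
The plan is to split the argument into two pieces: (i) a uniform single-scale bound on the probability of ``jumping over'' the annulus $\overline{V}(0,R,MR)$ when first exiting $B(0,R)$, and (ii) a conditional Borel--Cantelli argument that strings together infinitely many fresh attempts at different scales to force infinitely many annuli to be hit.

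For the single-scale bound, I would show that there exists $M\geq 2$ (depending on $d$ and the scaling constants) such that $\P_x(X_{\tau_{B(0,R)}}\in B(0,MR)^c)\leq 1/2$ for every $R>0$ and every $|x|<R$. By the Ikeda--Watanabe (L\'evy system) formula this equals $\int_{B(0,R)} G_{B(0,R)}(x,y) \int_{B(0,MR)^c} j(|y-z|)\,dz\,dy$. For $|y|<R$ and $|z|>MR$ one has $|y-z|\geq (M-1)R$, so the inner integral is dominated by $\int_{|w|\geq (M-1)R} j(|w|)\,dw$. The global scaling \eqref{e:H} yields $j(r)\leq c_1 r^{-d}\Phi(r)^{-1}$ for all $r>0$ and $\Phi(\lambda R)\geq a_1 \lambda^{2\delta_1}\Phi(R)$ for $\lambda\geq 1$, from which a direct computation gives $\int_{|w|\geq (M-1)R} j(|w|)\,dw \leq c_2 (M-1)^{-2\delta_1}/\Phi(R)$. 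Combined with the standard bound $\int_{B(0,R)} G_{B(0,R)}(x,y)\,dy = \E_x[\tau_{B(0,R)}]\leq c_3\Phi(R)$, the factors $\Phi(R)$ cancel and the product is bounded by $c_4(M-1)^{-2\delta_1}$, which is $\leq 1/2$ once $M$ is taken large enough.

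For the iteration, set $T_0:=0$ and, for $j\geq 1$, define $n_j:=\min\{n\geq 1 : R_n > |X_{T_{j-1}}|\}$ and $T_j:=\inf\{t\geq T_{j-1}: |X_t|\geq R_{n_j}\}$. Since $X$ is transient, $|X_{T_{j-1}}|<\infty$ almost surely, so $n_j$ is well defined; $(n_j)$ is strictly increasing and $n_j\to\infty$ because $R_n\to\infty$. By construction $X_{T_{j-1}}\in B(0,R_{n_j})$, and the strong Markov property together with the single-scale bound applied at $T_{j-1}$ yields $\P_x\bigl(|X_{T_j}|\leq MR_{n_j}\bigm|\MM_{T_{j-1}}\bigr)\geq 1/2$ a.s. On the event $A_j:=\{|X_{T_j}|\leq MR_{n_j}\}$ we have $X_{T_j}\in\overline{V}(0,R_{n_j},MR_{n_j})$, hence $A_j\subseteq\{T_{\overline{V}(0,R_{n_j},MR_{n_j})}<\infty\}$. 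L\'evy's conditional Borel--Cantelli lemma then gives $\P_x(A_j\text{ i.o.})=1$, and since $(n_j)$ is strictly increasing to infinity, this yields the asserted $\P_x\bigl(\limsup_n\{T_{\overline{V}(0,R_n,MR_n)}<\infty\}\bigr)=1$.

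The main obstacle is the single-scale bound, where the global form of the estimates on $j$, $\E_x[\tau_{B(0,R)}]$, and $\Phi$ (all under \eqref{e:H}) have to be combined so that the $R$-dependence cancels cleanly, leaving a bound driven only by $(M-1)^{-2\delta_1}$. The announced dependence $M=M(d,\delta_1,a_1)$ is consistent with this: after cancellation, the only rate left in the estimate is governed by the lower scaling exponent $2\delta_1$ and the constant $a_1$. The iteration step is then a routine application of the strong Markov property and L\'evy's Borel--Cantelli, once the stopping times $(T_j)$ are engineered so that every attempt starts from inside the correct ball.
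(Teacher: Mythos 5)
Your proposal is correct, and it diverges from the paper's proof in an interesting way in the global iteration step. The single-scale bound --- that when first leaving $B(0,R)$ from inside, the process lands in $\overline{V}(0,R,MR)$ with probability at least $1/2$ --- is the same key estimate in both. The paper obtains it by quoting a ready-made exit-position bound from Grzywny (Corollary~2 of \cite{G}), which has constant $c(d)$ only and hence directly gives $M=M(d,\delta_1,a_1)$; you re-derive it via Ikeda--Watanabe together with $\E_x[\tau_{B(0,R)}]\lesssim\Phi(R)$ and the tail integral of $j$, which works but will pick up $a_2,\delta_2$ through the two-sided estimates on $j$ and $G$ unless you are careful to use only one-sided bounds --- immaterial for the lemma's existence statement, but worth noting if you want the stated dependence verbatim. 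The genuine difference is the ``stringing together'' step: the paper shows that each tail union $C_k=\bigcup_{n\ge k}V_n$ is hit $\P_x$-a.s.\ by invoking the unavoidable-sets criterion from \cite[Proposition~2.5]{HN13} (the balls $B(0,R_n)$ cover $\R^d$ and from each one the hitting probability of $\bigcup V_m$ is $\ge 1/2$), and then uses $\bigcap_k\{T_{C_k}<\infty\}=\limsup_n\{T_{V_n}<\infty\}$. You instead build explicit stopping times $T_j$ (successive exit times from balls $B(0,R_{n_j})$, with $n_j$ chosen $\MM_{T_{j-1}}$-measurably so that $X_{T_{j-1}}$ is strictly inside the ball), and run L\'evy's conditional Borel--Cantelli. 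Your route is self-contained and arguably more transparent; the paper's outsources the zero--one-law content to \cite{HN13}. Your bookkeeping is sound: $(n_j)$ is strictly increasing by construction so the hit annuli are distinct, $R_{n_j}$ is $\MM_{T_{j-1}}$-measurable so the strong Markov property yields $\P_x(A_j\mid\MM_{T_{j-1}})\ge 1/2$, and transience makes each $T_j$ a.s.\ finite, which is all that is needed for the conditional Borel--Cantelli conclusion.
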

Let $V_n:= \overline{V}(0,R_n,M R_n)$. Lemma \ref{l:infinite-hitting} says that $\P_x\left(\{T_{V_n}<\infty\}\ \text{infinitely often}\right)=1$, i.e.~with $\P_x$ probability $1$, the process $X$ visits infinitely many of the sets $V_n$.
\begin{proof}[Proof of Lemma \ref{l:infinite-hitting}]
By \cite[Corollary 2]{G}, there exists $c=c(d)$ such that for all  $0<s\le r/2$ and all $x\in B(0,s)$,
$$
	\P_x(|X_{\tau_{B(0,s)}}|\ge r)\le c\frac{\phi(r^{-2})}{\phi(s^{-2})}\le c a_1^{-1}\left(\frac{s}{r}\right)^{2\delta_1}\, ,
$$
where the second inequality follows from \eqref{e:H}. 
Choose $M\ge 2$ as the smallest integer such that $M\ge (a_1/2c)^{1/2\delta_1}$. Then $\P_x(|X_{\tau_{B(0,s)}}| > Ms)\le 1/2$ implying that for all $s>0$ and all $x\in B(0,s)$,
$$
\P_x \big( X_{\tau_{B(0,s)}}\in \overline{V}(0,s,Ms) \big) \ge \frac12\, .
$$
In particular, for every $n\ge 1$,
$$
\P_x(T_{\cup_{m=1}^{\infty}V_m}<\infty) \ge \P_x(T_{V_n}<\infty)\ge \P_x(X_{\tau_{B(0,R_n)}}\in V_n)\ge \frac12\, , \quad \text{for all }x\in B(0,R_n)\, .
$$
Since the family of balls $(B(0,R_n))_{n\ge 1}$ covers $\R^d$, by using the same argument as in \cite[Proposition 2.5]{HN13}, we see
that $\P_x(T_{\cup_{m=1}^{\infty}V_m}<\infty) =1$ for all $x\in \R^d$.

Let $C_k:=\bigcup_{n\geq k} V_n$. From the conclusion above we see that  $\P_x(T_{C_k}<\infty)=1$ for every $x\in \R^d$ and $k\geq 1$. Obviously,
$$
         \big\{T_{C_k} < \infty\big\}
         = \big\{T_{\bigcup_{n\geq k} V_n} < \infty\big\}
         = \bigcup_{n\geq k}\big\{T_{V_n} < \infty\big\}.
     $$
Since this inclusion holds for all $k\geq 1$, we get
$$
         \bigcap_{k\geq 1}\big\{T_{C_k} < \infty\big\}
	  = \bigcap_{k\geq 1}\bigcup_{n\geq k}\big\{T_{V_n} < \infty\big\}
       = \limsup_{n\to\infty}\big\{T_{V_n} < \infty\big\}.
     $$
Since $\P_x(T_{C_k} < \infty)=1$ we see
\begin{gather*}
         1 =  \P_x\bigg(\bigcap_{k\geq 1}\big\{T_{C_k} < \infty\big\}\bigg)
	  = \P_x\left(\limsup_{n\to\infty}\big\{T_{V_n} < \infty\big\}\right)
         = \P_x\big(\{T_{V_n}<\infty\}\ \text{i.o.}\big).
     \end{gather*}
\end{proof}

Before proving Theorem \ref{t:finite-expectation}, we need to collect several facts that were proved in \cite[Section 3]{SV}. 
\begin{lemma}\label{l:SV-section3}
Let $Y=(Y_t,\P_x)$ be a strong Markov process with values in $\R^d$. For a bounded $F:\R^d\times \R^d\to [0,\infty)$ let $A_t^F:=\sum_{0<s\le t}F(X_{s-}, X_s)$, and assume that $\P_x(A_{\infty}^F<\infty)=1$ for all $x\in \R^d$. Define $u:\R^d\to [0,1]$ by $u(x):=\E_x[e^{-A_{\infty}^F}]$.
\begin{itemize}
	\item[(a)] (\cite[Lemma 3.2]{SV}) $\P_x(\lim_{t\to \infty}u(Y_t)=1)=1$ for all $x\in \R^d$;
	\item[(b)] (\cite[Lemma 3.4]{SV}) For every open $D\subset \R^d$, $u$ is regular $F$-harmonic in $D$ with respect to $Y$,
	\item[(c)] (\cite[Proposition 3.5]{SV}) If  $\inf_{x\in \R^d}u(x)>0$, then $\sup_{x\in \R^d}\E_x[A_{\infty}^F]<\infty$;
	\item[(d)] (\cite[Proposition 3.6]{SV}) Assume that $Y$ is strong Feller and that $\lim_{t\to 0}\sup_{x\in \R^d}\E_x[A_t^F]=0$. Then $u$ is continuous.
\end{itemize}
\end{lemma}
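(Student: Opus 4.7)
The plan is to treat the four parts in order, with the additive-functional identity $A_\infty^F=A_t^F+A_\infty^F\circ\theta_t$ and the (strong) Markov property as the common backbone. For part (a), I would show that $N_t:=e^{-A_t^F}u(Y_t)$ is a bounded, uniformly integrable $(\MM_t)$-martingale. Indeed, by the Markov property and the identity above,
\[
\E_x[e^{-A_\infty^F}\mid\MM_t]=e^{-A_t^F}\E_{Y_t}[e^{-A_\infty^F}]=e^{-A_t^F}u(Y_t),
\]
so $N_t$ converges $\P_x$-a.s.\ and in $L^1$ to $e^{-A_\infty^F}$. Since $A_\infty^F<\infty$ $\P_x$-a.s., the factor $e^{-A_t^F}$ converges to the strictly positive limit $e^{-A_\infty^F}$, and dividing forces $u(Y_t)\to 1$ $\P_x$-a.s. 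Part (b) is then immediate by the strong Markov property at $\tau_D$: writing $e^{-A_\infty^F}=e^{-A_{\tau_D}^F}\cdot e^{-A_\infty^F\circ\theta_{\tau_D}}$ and taking $\E_x$ yields $u(x)=\E_x[e^{-A_{\tau_D}^F}u(Y_{\tau_D})]$.

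For part (c), set $m:=\inf_x u(x)>0$. Splitting $\E_x[e^{-A_\infty^F}]$ according to whether $A_\infty^F\le K_0$ or not and using $e^{-A_\infty^F}\le 1$ gives $\P_x(A_\infty^F\le K_0)\ge m-e^{-K_0}$, so choosing $K_0:=\log(2/m)$ yields the uniform lower bound $\P_x(A_\infty^F\le K_0)\ge q:=m/2$ for every $x$. Setting $L:=K_0+\|F\|_\infty$, I would then consider the stopping times $T_k:=\inf\{t>0:A_t^F>kL\}$, use the jump bound $A_{T_k}^F\le kL+\|F\|_\infty$ on $\{T_k<\infty\}$ (since each jump of $A^F$ is at most $\|F\|_\infty$), and apply the strong Markov property at $T_k$ to derive
\[
\P_x\bigl(A_\infty^F>(k+1)L\bigr)\le(1-q)\,\P_x\bigl(A_\infty^F>kL\bigr),
\]
whence $\P_x(A_\infty^F>kL)\le(1-q)^k$ uniformly in $x$, and $\sup_x\E_x[A_\infty^F]\le L/q<\infty$ by tail summation. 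The main subtle point here is the uniform-in-$x$ control of the constants $K_0$ and $q$; this is exactly what the hypothesis $\inf_x u(x)>0$ (as opposed to mere pointwise positivity) buys us.

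Finally, for part (d), the martingale identity from (a) rewritten as $u(x)=\E_x[e^{-A_t^F}u(Y_t)]$, combined with $|e^{-s}-1|\le s$ for $s\ge 0$ and $0\le u\le 1$, gives
\[
|u(x)-P_t u(x)|=\bigl|\E_x[(e^{-A_t^F}-1)u(Y_t)]\bigr|\le\E_x[A_t^F]\le\sup_{y\in\R^d}\E_y[A_t^F].
\]
The hypothesis $\sup_x\E_x[A_t^F]\to 0$ as $t\downarrow 0$ therefore says that $P_t u\to u$ uniformly, and since the strong Feller property guarantees each $P_t u$ is continuous, $u$ is a uniform limit of continuous functions, hence continuous. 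The only part demanding genuine work is (c), specifically the bookkeeping of the jump at $T_k$ and the extraction of the uniform geometric decay; parts (a), (b), (d) are essentially one-line consequences of the Markov property and the inequality $|e^{-s}-1|\le s$.
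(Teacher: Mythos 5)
Your proposal is correct, and in fact the paper gives no proof of this lemma at all: it is a list of facts quoted verbatim from \cite[Section 3]{SV}, and your arguments (the closed bounded martingale $e^{-A^F_t}u(Y_t)$ plus a.s.\ martingale convergence for (a), the strong Markov property at $\tau_D$ for (b), the uniform geometric tail bound $\P_x(A^F_\infty>kL)\le(1-q)^k$ obtained by iterating the strong Markov property at the times $T_k$ for (c), and the uniform estimate $|u-P_tu|\le\sup_y\E_y[A^F_t]$ combined with the strong Feller property for (d)) are essentially the ones used there. The one point you should make explicit in (b) is the interpretation of $e^{-A^F_{\tau_D}}u(Y_{\tau_D})$ on $\{\tau_D=\infty\}$ when $D$ is unbounded (the natural convention, justified by part (a), is to read this term as $e^{-A^F_\infty}$); for the bounded open sets to which the lemma is applied in this paper, $\tau_D<\infty$ a.s.\ and the issue does not arise.
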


\noindent
\emph{Proof of Theorem \ref{t:finite-expectation}}.
It follows from \eqref{e:fuchsian} and \eqref{e:phi} that $F(x,y)\le 4^\beta C(\Phi(|x-y|)^{\beta}\wedge 1)$, hence $F\in I(4^\beta C, \Phi(\cdot)^{\beta}\wedge 1)$. 
Let $u(x):=\E_x [e^{-A^F_{\infty}}]$.  
By Lemma \ref{l:SV-section3}(b) and (d),  and Lemma \ref{l:kato-class} it follows that $u$ is continuous and regular $F$-harmonic for $X$ in every bounded open set $D\subset \R^d$. Furthermore, by Lemma \ref{l:SV-section3}(a), we have that $\lim_{t\to \infty}u(X_t)=1$ $\P_x$-a.s. In order to prove that $\sup_{x\in \R^d} \E_x[A^F_{\infty}]$ is finite, by Lemma \ref{l:SV-section3}(c) it is enough to show that $\inf_{x\in \R^d} u(x)=c$ for some $c>0$. Note that, since $u$ is continuous, this is equivalent to
\begin{equation}\label{e:finite-expectation}
 \liminf_{|x|\to\infty}u(x)>0.
\end{equation}
    Let $D=V(0,1,2M+1)=\{x\in \R^d:\, 1<|x|<2M+1\}$ and $R\geq 1$. Since $u$ is regular $F$-harmonic in $D_R$ for $X$, we get from Lemma \ref{l:harmonic-scaling-b} that $u_R$ is regular $F_R$-harmonic in $D$ for $X^R$. Define $\wh{F}_R$ as in Lemma \ref{l:fuchsian}(b).  Since $F_R(x,y)=\widehat{F}_R(x,y)$ for all $(x,y)\in (D\times \R^d)\cup (\R^d \times D)$, $u_R$ is by Lemma \ref{l:harmonic-semilocal} also regular $\widehat{F}_R$-harmonic in $D$ for $X^R$. 
     Moreover, by Lemma \ref{l:fuchsian}, $\widehat{F}_R\in I(4^\beta C,\Phi^R(\cdot)^{\beta}\wedge 1)$. 
    Hence it follows from Theorem \ref{t:hi} that with $c=C_5(d, a_1,a_2,\delta_1, \delta_2,\beta,C)>1$ it holds that
    $$
        c^{-1}u_R(y)\le u_R(x) \le c u_R(y)\quad \text{for all\ \ }x,y\in \overline{V}(0,2,2M)\, .
    $$
Since $R\ge 1$ was arbitrary, we conclude that
    \begin{equation}\label{e:hi-scaled}
        c^{-1}u(y)\le u(x)\le c u(y) \quad \text{for all\ \ }x,y\in \overline{V}(0,2R,2RM)\, ,
    \end{equation}
    for all $R\ge 1$.
    
    Suppose that \eqref{e:finite-expectation} does not hold, i.e.~that there exists a sequence $(x_n)_{n\ge 1}$ in $\R^d$ such that $|x_n|\to \infty$ and $\lim_{n\to \infty} u(x_n)=0$. Then there exists an increasing sequence $(k_n)_{n\geq 1}$ such that $x_n\in V_{n}:=\overline{V}(0,2^{k_n}, 2^{{k_n}}M)$ for every $n\ge 1$. By Lemma \ref{l:infinite-hitting}, $X$ hits infinitely many sets $V_{n}$ $\P_x$-a.s.~Hence, for $\P_x$-a.e.~$\omega$ there exists a subsequence $(n_l=n_l(\omega))$  and a sequence of times $(t_l=t_l(\omega))$ such that $X_{t_l}(\omega)\in V_{{n_l}}$. Therefore it follows from \eqref{e:hi-scaled} that
    $$
        c^{-1}u(X_{t_l}(\omega))\le u(x_{n_l}) \le c u(X_{t_l}(\omega)),
    $$
    which implies that $\lim_{l\to \infty}u(X_{t_l}(\omega))=0$. But this is a contradiction with $\lim\limits_{t\to \infty}u(X_t)=1$ $\P_x$-a.s. Therefore, \eqref{e:finite-expectation} holds.   
\qed

%%%%%%%%%%%%%%%%%%%%%%%%%%%%%%%%%%%%%%%%%%%%%%%%%%%%%%%%%%%%%%%%%%%%%%%%%%%%%%%%%%%
%%%%%%%%%%%%%%%%%%                          Proof of Theorem \ref{t:infinite-expectation}                          %%%%%%%%%%%%%%%%%%%%%%%%%%%%
%%%%%%%%%%%%%%%%%%%%%%%%%%%%%%%%%%%%%%%%%%%%%%%%%%%%%%%%%%%%%%%%%%%%%%%%%%%%%%%%%%%
\section{Proof of Theorem \ref{t:infinite-expectation}}\label{s:proof-infinite}
In this section we assume that $\phi$ is a complete Bernstein functions satisfying the global weak scaling condition \eqref{e:H}. 
Recall that under this condition $X$ is transient. Then $X$ admits the radially decreasing  Green function $G(x,y)=G(|x-y|)$, $x,y\in \R^d$. By \cite[Lemma 3.2(b)]{KSV14}, there exists $C_6=C_6(d,a_1,a_2,\delta_1, \delta_2)\ge 1$  such that
\begin{equation}\label{e:free-G-estimates}
C_6^{-1}r^{-d}\Phi(r)\le G(r)\le C_6 r^{-d}\Phi(r)\, , \quad r>0\, .
\end{equation}

The invariant $\sigma$-field $\II$ is defined by
$$
\II=\{\Lambda\in \MM:\, \theta_t^{-1}\Lambda=\Lambda \text{ for all }t\ge 0\}.
$$
Since $X$ has transition densities $p(t,x,y)$, the argument at the end of \cite[Section 2]{SV} shows that if $\II$ is trivial under $\P_x$ for some $x\in \R^d$, then it is trivial under $\P_x$ for all $x\in \R^d$.

\medskip
\noindent
\emph{Proof of Theorem \ref{t:infinite-expectation}}:
 Fix $\gamma$ and $\beta$ so that $0<\gamma <1 <\beta$. Since the function $r\mapsto \Phi(r)^{1-\gamma}$ strictly increases from $0$ to $\infty$, we can choose a sequence   $(x_n)_{n\ge 1}$ of points in $\R^d$ such that $|x_n|>2^n$ and $\Phi(|x_n|)^{1-\gamma}=2^{nd}$ for all $n\ge 1$.  Let $r_n=2^{-n}|x_n|+1<|x_n|$.  Consider the family of balls $\{B(x_n,r_n)\}_{n\ge 1}$. 
By \cite[Lemma 2.5]{MV}, \eqref{e:free-G-estimates} and \eqref{e:H}
    \begin{eqnarray*}
        \lefteqn{\P_0(T_{B(x_n,r_n)}<\infty)
         \le  \frac{|x_n|^{-d}\phi(|x_n|^{-2})^{-1}}{r_n^{-d}\phi(r_n^{-2})^{-1}}\le a_2\left(\frac{r_n}{|x_n|}\right)^{d-2\delta_2}}\\
        &=&\left(\frac{2^{-n}|x_n|+1}{|x_n|}\right)^{d-2\delta_2}
        \le (2^{-n}+2^{-n})^{d-2\delta_2}=2^{(1-n)(d-2\delta_2)}.
    \end{eqnarray*}
    Hence,
    $\sum_{n\ge 1}\P_0(T_{B(x_n,r_n)}<\infty) <\infty$, implying by the Borel--Cantelli lemma that $\P_0(\{T_{B(x_n,r_n)}<\infty\} \text{\ i.o.})=0$. Therefore, $X$ hits $\P_0$-a.s.~only finitely many balls $B(x_n,r_n)$. Let $C:=\bigcup_{n\ge 1}B(x_n,r_n)$.

    Define a symmetric bounded function $F:\R^d\times \R^d \to [0,\infty)$ by
    $$
        F(y,z)
        :=\begin{cases}
            \dfrac{\Phi(|y-z|)^{\beta}}{\Phi(|y|)^{\gamma}+\Phi(|z|)^{\gamma}},& y,z\in B(x_n,r_n) \text{\ for some\ } n, \;  |y-z|\le 1\\[\bigskipamount]
            0, & \text{otherwise}.
        \end{cases}
    $$
    Note that $F(y,z)\le \Phi(|y-z|)^{\beta}\wedge 1$ for all $y,z\in \R^d$. Thus, $F\in I(1,\Phi(\cdot)^{\beta}\wedge 1)$.

    Let $A_t^F:=\sum_{s\le t}F(X_{s-},X_s)$, $t\ge 0$. Then $\E_0 [A_t^F]<\infty$ implying that $\P_0(A_t^F<\infty)=1$ for all $t>0$.  Since $X$ visits only finitely many balls $B(x_n, r_n)$, the last exit time from the union $\bigcup_{n\ge 1}B(x_n,r_n)$ is finite, hence $\P_0(A_{\infty}^F<\infty)=1$. Since $\{A_{\infty}^F<\infty\}\in \II$, we conclude  that $\P_x(A_{\infty}^F<\infty)=1$ for all $x\in \R^d$.

    Further,
    \begin{align*}
    \E_x [A_{\infty}^F]
    &=\E_x \left[\sum_{s>0}F(X_{s-},X_s)\right]\\
    &=\E_x \left[\int_0^{\infty}\!\!\! \int_{\R^d} F(X_{s-},z)j(|X_{s-}-z|)\, dz \, ds\right]\\
   &=\E_x \left[\int_0^{\infty} h(X_s)\, ds\right]
    =Gh(x)
    =\int_{\R^d}h(y)G(x,y)\,dy,
    \end{align*}
    where
    $$
        h(y)
         :=\int_{\R^d}F(y,z)j(|y-z|)\, dz
    $$
    If $y\notin C$, then $F(y,\cdot)=0$, implying that $h(y)=0$. Let $y\in B(x_n,r_n-1)$. Then $|y|\le 2|x_n|$ and if $z$ satisfies $|z-y|<1$, then $z\in B(x_n,r_n)$ and also $|z|\le 2|x_n|$. Therefore, by use of \eqref{e:estimate-of-j} in the first line and \eqref{e:H} in the second
\begin{align*}
    h(y)
    &\ge c_1  \int_{z\in B(x_n,r_n), |z-y|\le 1} \frac{\Phi(|y-z|)^{\beta}}{\Phi(|y|)^{\gamma}+\Phi(|z|)^{\gamma}}\, |y-z|^{-d}\Phi(|y-z|)^{-1}\, dz\\
    &\ge  c_2 \int_{|z-y|\le 1} \frac{\Phi(|y-z|)^{\beta-1}|y-z|^{-d}}{\Phi(|x_n|)^{\gamma}}\, dz \ge c_3 \Phi(|x_n|)^{-\gamma}.
\end{align*}
    In the last inequality we have used that $0 < \int_{|z-y|\le 1}\Phi(|y-z|)^{\beta-1}|y-z|^{-d}\, dz <\infty$. Hence, for $|x|\le 1$ we have $|x-y|\leq 4|x_n|$, so
\begin{align*}
    Gh(x)
    &=  \sum_{n\ge 1}\int_{B(x_n,r_n)}  h(y)G(x,y)\, dy\\
    &\ge   \sum_{n\ge 1}\int_{B(x_n,r_n-1)}  h(y)G(|x-y|)\, dy\\
    &\ge  c_4 \sum_{n\ge 1} \Phi(|x_n|)^{-\gamma} \int_{B(x_n, r_n-1)}G(4|x_n|)\, dy \\
    &\ge  c_5 \sum_{n\ge 1} \Phi(|x_n|)^{-\gamma} |4x_n|^{-d}\Phi(4|x_n|)(r_n-1)^d\\
    &\ge  c_6 \sum_{n\ge 1} \Phi(|x_n|)^{1-\gamma}2^{-nd} \\
    &= c_6 \sum_{n\ge 1} 2^{nd}2^{-nd} = \infty.
\end{align*}
This implies that $Gh\equiv \infty$.\qed

%%%%%%%%%%%%%%%%%%%%%%%%%%%%%%%%%%%%%%%%%%%%%%%%%%%%%%%%%%%%%%%%%%%%%%%%%%%%%%%%%%%
%%%%%%%%%%%%%%%%%%                          Proof of Theorem \ref{t:entropy}                         %%%%%%%%%%%%%%%%%%%%%%%%%%%%
%%%%%%%%%%%%%%%%%%%%%%%%%%%%%%%%%%%%%%%%%%%%%%%%%%%%%%%%%%%%%%%%%%%%%%%%%%%%%%%%%%%
\section{Proof of Theorem \ref{t:entropy}}\label{s:proof-entropy}
 
In order to prove Theorem \ref{t:entropy}, we first collect several auxiliary results from the literature. 

Let $j(x,y):=j(|x-y|)$. The Dirichlet form $(\EE, \FF)$ of $X$ is
$$
    \FF=\left\{f\in L^2(\R^d,dx)\,:\, \int_{\R^d}\int_{\R^d} (f(x)-f(y))^2 j(x,y)\, dx \, dy <\infty\right\},
$$
and
$$
    \EE(f,f)=\int_{\R^d}\int_{\R^d} (f(x)-f(y))^2 j(x,y)\, dx \, dy,\quad f\in \FF.
$$

Let $F\in I_2(X)$ such that $\inf_{x,y\in \R^d} F(x,y) >-1$, and let $\widetilde{X}=(\widetilde{X}_t, \MM, \MM_t, \widetilde{\P}_x)$ be the corresponding  purely discontinuous Girsanov transform of $X$. It follows from \cite[Lemma 2.1 and Theorem 2.5]{S} that the semigroup of $\widetilde{X}$ is symmetric (with respect to Lebesgue measure), and that the Dirichlet form $(\widetilde{\EE}, \widetilde{\FF})$ of $\widetilde{X}$ in $L^2(\R^d, dx)$ is given by $\widetilde{\FF}=\FF$, and
$$
    \widetilde{\EE}(f,f)=\int_{\R^d}\int_{\R^d} (f(x)-f(y))^2 (1+F(x,y))j(x,y)\, dx\, dy, \quad f\in \widetilde{\FF}.
$$
We read that the jumping density $\tilde{j}(x,y)$ of $\wt{X}$ is equal to $(1+F(x,y))j(x,y)$. 
Let $\wt{c_1}:=\inf_{x,y\in \R^d}F(x,y)\le \sup_{x,y\in \R^d}F(x,y)=:\wt{c_2}$. Then $(1+\wt{c_1})j(x,y)\le \tilde{j}(x,y)\le (1+\wt{c_2} )j(x,y)$. Hence, there exists $\wt{C}>1$ such that
\begin{equation}\label{e:j-comparison}
\wt{C}^{-1}j(x,y)\le \tilde{j}(x,y)\le \wt{C}j(x,y)\, ,\quad x,y\in \R^d\, .
\end{equation}
Since the killing measure of $\widetilde{X}$ is zero and $1+F(x,y)$ is bounded from below and above by positive constants, we conclude that $\widetilde{X}$ is conservative. It follows from \eqref{e:estimate-of-j} and \eqref{e:j-comparison} that there exists a positive constant $C_7\ge 1$ such that
\begin{align*}
& C_7^{-1}|x-y|^{-d}\Phi(|x-y|)^{-1}\le j(x,y) \le C_7 |x-y|^{-d}\Phi(|x-y|)^{-1}\, , \quad x,y\in \R^d\, ,\\
& C_7^{-1}|x-y|^{-d}\Phi(|x-y|)^{-1}\le \tilde{j}(x,y) \le C_7 |x-y|^{-d}\Phi(|x-y|)^{-1}\, , \quad x,y\in \R^d\, .
\end{align*}
It is now straightforward to check that the conditions of \cite[Theorem 1.2]{CK08} are satisfied for both $j(x,y)$ and $\tilde{j}(x,y)$ (with $\varphi_1$ from \cite{CK08} equal to our $\Phi$ and $\psi\equiv 1$). Hence, it follows from \cite[Theorem 1.2]{CK08} that $\wt{X}$ has transition densities $\wt{p}(t,x,y)$ satisfying the sharp two-sided estimates: For all $t>0$ and all $x,y\in \R^d$,
$$
C_8^{-1}\left(\frac{1}{\Phi^{-1}(t)^d}\wedge \frac{t}{|x-y|^d\Phi(c|x-y|)}\right)\le \wt{p}(t,x,y) \le C_8 \left(\frac{1}{\Phi^{-1}(t)^d}\wedge \frac{t}{|x-y|^d\Phi(c|x-y|)}\right)
$$
with $C_8\ge 1$ and $c>0$. Here $\Phi^{-1}$ is the inverse of the strictly increasing function $\Phi$. We note that the fact that the same constant $c$ appears on both sides of the estimate is a consequence of the scaling of $\Phi$. The same sharp two-sided estimates are valid for transition densities $p(t,x,y)$ of the process $X$. As a consequence, with $C_9=C_8^2$ we have that
\begin{equation}\label{e:comparison-p}
C_9^{-1}p(t,x,y) \le \wt{p}(t,x,y) \le C_9 p(t,x,y)\, ,\quad t>0, x,y\in \R^d\, .
\end{equation}
By integrating over $(0,\infty)$ with respect to time $t$ we arrive at
\begin{equation}\label{e:comparison-G}
C_9^{-1}G(x,y) \le \wt{G}(x,y) \le C_9 G(x,y)\, ,\quad x,y\in \R^d\, ,
\end{equation}
where $\wt{G}(x,y)$ denotes the Green function of the process $\wt{X}$.

It follows from \cite[Lemma A.1]{MV} that if $h:\R^d \to [0,\infty)$ is harmonic in $\R^d$ with respect to $X$,  then $h$ is a constant function. This implies that the Martin compactification of $\R^d$ consists of a single point. This is equivalent to the fact that
\begin{equation}\label{e:Limit-at-infty}
\lim_{|y|\to \infty}\frac{G(x,y)}{G(0,y)}=1\, .
\end{equation}

Recall that the invariant $\sigma$-field was defined by $\II=\{\Lambda\in \MM:\, \theta_t^{-1}\Lambda=\Lambda \text{ for all }t\ge 0\}$. With \eqref{e:Limit-at-infty} and \eqref{e:comparison-G} at hand, the proof of the next lemma is exactly the same as the one of \cite[Lemma 5.1]{SV}.

\begin{lemma}\label{l:invariant-trivial}
    The invariant $\sigma$-field $\II$ is trivial with respect to both $\P_x$ and $\widetilde{\P}_x$ for all $x\in \R^d$.
\end{lemma}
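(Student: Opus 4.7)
The plan is to follow the proof of \cite[Lemma 5.1]{SV} essentially verbatim, using the two ingredients the excerpt has just established: the Liouville-type statement that every non-negative $X$-harmonic function on $\R^d$ is constant (from \cite[Lemma A.1]{MV}, equivalently the Martin compactification consists of a single point, i.e.~\eqref{e:Limit-at-infty}), and the Green-function comparison \eqref{e:comparison-G}. Since $X$ and $\wt{X}$ both have transition densities, the remark quoted just before \eqref{e:Limit-at-infty} reduces the problem to showing triviality of $\II$ under $\P_0$ for $X$ and under $\wt{\P}_0$ for $\wt{X}$.

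For $X$, I would argue as follows. Take $\Lambda\in\II$ and set $h(x):=\P_x(\Lambda)$. The identity $\theta_t^{-1}\Lambda=\Lambda$ combined with the Markov property shows that $h(X_t)=\E_x[\mathbf{1}_\Lambda\mid\MM_t]$, so $h$ is bounded and (regular) $X$-harmonic on every bounded open set, hence on $\R^d$. By the cited Liouville property, $h\equiv c$ for some $c\in[0,1]$. On the other hand, by L\'evy's upward theorem, $h(X_t)=\E_0[\mathbf{1}_\Lambda\mid\MM_t]\to\mathbf{1}_\Lambda$ $\P_0$-a.s., which forces $\mathbf{1}_\Lambda=c$ $\P_0$-a.s.~and therefore $c\in\{0,1\}$. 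This shows $\P_0(\Lambda)\in\{0,1\}$, so $\II$ is trivial under $\P_0$, and then under $\P_x$ for every $x\in\R^d$.

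For $\wt{X}$ the same argument runs word for word, with $\P_x$ replaced by $\wt{\P}_x$ and $X$ by $\wt{X}$, provided we know that every bounded non-negative $\wt{X}$-harmonic function on $\R^d$ is constant. This is exactly where \eqref{e:comparison-G} enters. Since $\wt{G}(x,y)$ is comparable to $G(x,y)$ up to the multiplicative constant $C_9$, we have
\[
\frac{\wt{G}(x,y)}{\wt{G}(0,y)}\le C_9^{2}\,\frac{G(x,y)}{G(0,y)},\qquad \frac{\wt{G}(x,y)}{\wt{G}(0,y)}\ge C_9^{-2}\,\frac{G(x,y)}{G(0,y)},
\]
so by \eqref{e:Limit-at-infty} the ratio $\wt{G}(x,y)/\wt{G}(0,y)$ stays bounded between two positive constants as $|y|\to\infty$, and any subsequential Martin-kernel limit is a bounded non-negative $\wt{X}$-harmonic function. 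Combining this with the sharp two-sided estimates on $\wt{p}(t,x,y)$ and the Harnack inequality available for $\wt{X}$ (which follows from those estimates exactly as in \cite[Lemma A.1]{MV}), one concludes that the Martin boundary of $\wt{X}$ is also a single point and every bounded $\wt{X}$-harmonic function is constant. The martingale argument of the previous paragraph then yields triviality of $\II$ under $\wt{\P}_0$, hence under $\wt{\P}_x$ for all $x\in\R^d$.

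The main obstacle is the transfer of the Liouville property from $X$ to $\wt{X}$. Everything else is soft: the reduction to one base point, the identification of $x\mapsto\P_x(\Lambda)$ with a bounded harmonic function, and L\'evy's $0$--$1$ law are standard. What one must check carefully is that the proof of \cite[Lemma A.1]{MV} uses only the two-sided Green-function bounds \eqref{e:free-G-estimates} and (scale-invariant) Harnack inequality at the level of the jumping kernel $j(x,y)$, both of which persist for $\wt{X}$ by \eqref{e:j-comparison} and \eqref{e:comparison-G}; given that, the Liouville property for $\wt{X}$ is automatic and the proof is complete.
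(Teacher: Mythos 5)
Your proposal is correct and takes the same approach the paper intends: the paper itself gives no details and simply asserts that with \eqref{e:comparison-G} and \eqref{e:Limit-at-infty} at hand the proof of \cite[Lemma 5.1]{SV} carries over verbatim, and you correctly identify these two ingredients and reconstruct the martingale-plus-Liouville argument for $X$ together with the transfer of the Liouville property to $\widetilde{X}$. One small caveat: the paragraph about subsequential Martin-kernel limits is dispensable and a bit circular as written --- boundedness of $\widetilde{G}(x,y)/\widetilde{G}(0,y)$ together with Harnack does not by itself produce a one-point Martin boundary --- whereas your closing observation (that the proof of \cite[Lemma A.1]{MV} relies only on the two-sided Green-function estimates and a scale-invariant Harnack inequality, both of which persist for $\widetilde{X}$ via \eqref{e:j-comparison}, \eqref{e:comparison-p} and \eqref{e:comparison-G}) is the clean way to transfer the Liouville property and is exactly what makes the paper's appeal to \cite[Lemma 5.1]{SV} legitimate.
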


\medskip
\noindent
\emph{Proof of Theorem \ref{t:entropy}}: (a) Dichotomy  $\wt{\P}_x\perp \P_x$ or $\wt{\P}_x\sim \P_x$ follows directly from \cite[Corollary 2.13]{SV} and Lemma  \ref{l:invariant-trivial}. Further, the assumption $\wt{\P}_x\sim \P_x$ implies by \cite[Theorem 1.1]{SV} that
$$
\P_x\left(\sum_{t>0}F^2(X_{t-}, X_t)<\infty\right)= 1\, .
$$
Next, by the assumption \eqref{e:fuchsian-squared-a},
$$
F^2(x,y)\le C^2 \frac{\Phi(|x-y|)^{2\beta}}{(1+\Phi(|x|)^{\beta}+\Phi(|y|)^{\beta})^2}\le C^2\frac{\Phi(|x-y|)^{2\beta}}{1+\Phi(|x|)^{2\beta}+\Phi(|y|)^{2\beta}}\, ,
$$
implying that $F^2$ satisfies \eqref{e:fuchsian} with $2\beta >1$. By use of Theorem \ref{t:finite-expectation} we conclude that $\sup_{x\in \R^d} \E_x \left[\sum_{t>0} F^2 (X_{t-}, X_t)\right] <\infty$. Finally, \cite[Theorem 1.1(\~c)]{SV}   implies that $\sup_{x\in \R^d} \HH(\P_x; \widetilde{\P}_x)<\infty$.

\noindent 
(b) By assumption $2\gamma <1 <2\beta$. 
Denote by $B(x_n,r_n)$ the balls constructed in the proof of Theorem \ref{t:infinite-expectation} and let
    $$
        H(x,y)
        =
        \begin{cases}
            \dfrac{\Phi(|x-y|)^{2\beta}}{\Phi(|x|)^{2\gamma}+\Phi(|y|)^{2\gamma}}, & x,y\in B(x_n,r_n) \text{\ for some $n$ and $|x-y|<1$},\\[\bigskipamount]
            0, & \text{otherwise}.
        \end{cases}
    $$
    Define $F(x,y)=\frac18 \sqrt{H(x,y)}$. Then
    $$
        F(x,y)
         = \frac18\, \frac{\Phi(|x-y|)^{\beta}}{\sqrt{\Phi(|x|)^{2\gamma}+\Phi(|y|)^{2\gamma}}}
        \le \frac14 \frac{\Phi(|x-y|)^{\beta}}{\Phi(|x|)^{\gamma}+\Phi(|y|)^{\gamma}}
        \le \frac12 \frac{\Phi(|x-y|)^{\beta}}{1+\Phi(|x|)^{\gamma}+\Phi(|y|)^{\gamma}}
    $$
since we can take $|x|,|y|\ge 1$.
    As $\sum_{t>0}F^2(X_{t-}, X_t)=\frac{1}{64} \sum_{t>0}H(X_{t-},X_t)$, 
    we see from Theorem \ref{t:infinite-expectation} that $\sum_{t>0}F^2(X_{t-}, X_t)<\infty$ $\P_x$ a.s.~and $\E_x[\sum_{t>0}F^2(X_{t-}, X_t)]=\infty$. By \cite[Theorem 1.1(\~b) and (\~c)]{SV} it follows that $\P_x\ll \widetilde{\P}_x$ and $\H(\P_x;\widetilde{\P}_x)=\infty$.
\qed

\bigskip
\noindent
{\bf Acknowledgements:} 
We thank the referee for very helpful comments on the first version of this paper.

\end{doublespace}

\bigskip
\noindent
\vspace{.1in}
\begin{singlespace}

%%%%%%%%%%%%%%%%%%%%%%%%%%%%%%%%%%
%%%%%%%%%%%%%%%%%%%%%%%%%%%%%%%%%%
%%%%%%%%            References
%%%%%%%%%%%%%%%%%%%%%%%%%%%%%%%%%
%%%%%%%%%%%%%%%%%%%%%%%%%%%%%%%%%%

\small

\end{singlespace}

\vskip 0.1truein

\parindent=0em

\bigskip

{\bf Zoran Vondra\v{c}ek}

Department of Mathematics, Faculty of Science, University of Zagreb, Zagreb, Croatia,

Email: \texttt{vondra@math.hr}

\bigskip

{\bf Vanja Wagner}

Department of Mathematics, Faculty of Science, University of Zagreb, Zagreb, Croatia,

Email: \texttt{vanja.wagner@math.hr}

\end{document}